\documentclass[12pt,oneside,english]{amsart}
\usepackage[T1]{fontenc}
\usepackage[latin9]{inputenc}
\usepackage[a4paper]{geometry}
\geometry{verbose,tmargin=3cm,bmargin=3cm,lmargin=2cm,rmargin=2cm}
\usepackage{babel}
\usepackage{prettyref}
\usepackage{mathtools}
\usepackage{amstext}
\usepackage{amsthm}
\usepackage{amssymb}
\usepackage{graphicx}
\usepackage{esint}
\usepackage[unicode=true,
 bookmarks=false,
 breaklinks=false,pdfborder={0 0 1},backref=section,colorlinks=false]
 {hyperref}

\makeatletter
\numberwithin{equation}{section}
\numberwithin{figure}{section}
\theoremstyle{plain}
\newtheorem{thm}{\protect\theoremname}
\theoremstyle{plain}
\newtheorem{cor}[thm]{\protect\corollaryname}
\theoremstyle{plain}
\newtheorem{prop}[thm]{\protect\propositionname}
\theoremstyle{plain}
\newtheorem{lem}[thm]{\protect\lemmaname}


\usepackage{babel}
\makeatletter
\@namedef{subjclassname@2020}{%
  \textup{2020} Mathematics Subject Classification}
\makeatother

\makeatother

\providecommand{\corollaryname}{Corollary}
\providecommand{\lemmaname}{Lemma}
\providecommand{\propositionname}{Proposition}
\providecommand{\theoremname}{Theorem}

\begin{document}
\title[Quantitative Weyl law]{ Quantitative version of Weyl's law}
\author{Nikhil Savale}
\thanks{N. S. is partially supported by the DFG funded project CRC/TRR 191.}
\address{Universität zu Köln, Mathematisches Institut, Weyertal 86-90, 50931
Köln, Germany}
\email{nsavale@math.uni-koeln.de}
\subjclass[2020]{58C40, 81Q20, 37B20}
\begin{abstract}
We prove a general estimate for the Weyl remainder of an elliptic,
semiclassical pseudodifferential operator in terms of volumes of recurrence
sets for the Hamilton flow of its principal symbol. This quantifies
earlier results of Volovoy \cite{Volovoy-1990-II,Volovoy-1990-I}.
Our result particularly improves Weyl remainder exponents for compact
Lie groups and surfaces of revolution. And gives a quantitative estimate
for Bérard's Weyl remainder in terms of the maximal expansion rate
and topological entropy of the geodesic flow.
\end{abstract}

\maketitle

\section{Introduction}

Let $X^{n}$ be a smooth, compact, $n$-dimensional manifold. Let
$P\in\Psi_{h,\textrm{cl}}^{m}\left(X\right)$ a self-adjoint, semiclassical
($h$-)pseudodifferential operator. We assume that the total symbol
of $P$ has a classical expansion 
\begin{align}
P & =p_{h}^{W},\label{eq:classical pseudodifferential operator}\\
p_{h} & \sim p_{0}+hp_{1}+\ldots,
\end{align}
for $p_{j}\in C^{\infty}\left(T^{*}X\right)$, $j=0,1,2,\ldots$.
Let $\left[a,b\right]\subset\mathbb{R}$ and assume that the principal
symbol $p_{0}$ is elliptic in this interval, i.e. $p_{0}^{-1}\left[a,b\right]\Subset T^{*}X$
is compact. The spectrum of $P_{h}$ in $\left[a,b\right]$ is then
discrete for $h$ sufficiently small. Denote by $N_{h}\left[a,b\right]$
the number of eigenvalues contained in this interval. Further suppose
that $a,b$ are non-critical for the principal symbol $p_{0}$, whereby
the the energy levels $\Sigma_{a}\coloneqq p_{0}^{-1}\left(a\right)\subset T^{*}X,\,\Sigma_{b}\coloneqq p_{0}^{-1}\left(b\right)\subset T^{*}X$
are well-defined hypersurfaces and so is the Hamiltonian flow $e^{tH_{p_{0}}}$
for the principal symbol on them. We denote by the shorthand $e^{tH_{p_{0}}^{a}}\coloneqq\left.e^{tH_{p_{0}}}\right|_{\Sigma_{a}}$
the Hamilton flow restricted to the given energy level. And by $T_{0}^{a}>0$
the shortest period of the restricted Hamilton flow $e^{tH_{p_{0}}^{a}}$.
These energy levels also carry the canonical Liouville volume form
$d\nu$ such that $dp_{0}\wedge d\nu$ is the canonical symplectic
volume form in a neighbourhood of these levels in $T^{*}X$. 

Next, define the recurrence sets 
\begin{equation}
S_{T,\varepsilon}^{a}\coloneqq\left\{ \left(x,\xi\right)\subset\Sigma_{a}|\exists t\in\left[\frac{1}{2}T_{0}^{a},T\right]\textrm{ s.t. }d\left(e^{tH_{p_{0}}}\left(x,\xi\right),\left(x,\xi\right)\right)\leq\varepsilon\right\} \label{eq:recurrence set}
\end{equation}
and $S_{T,\varepsilon}^{a,e}\coloneqq\left\{ \left(x,\xi\right)\subset\Sigma_{a}|d\left(\left(x,\xi\right),S_{T,\varepsilon}^{a}\right)\leq\varepsilon\right\} $
for each $T>\frac{1}{2}T_{0}^{a}$, $\varepsilon>0$. These are defined
with respect to a distance function $d$ on phase space that is equivalent
to a manifold distance, although the dependence of the above \prettyref{eq:recurrence set}
on $d$ is supressed in the notation. 

Next we further, for each $\ell>0$, define the Ehrenfest time via
\begin{align}
T_{E}^{\ell,a}\left(h\right) & \coloneqq\begin{cases}
\infty; & \textrm{if }\exists C>0\textrm{ s.t.}\;\left|\partial^{\alpha}e^{tH_{p_{0}}^{a}}\right|\lesssim\left|t\right|^{C},\;\forall\alpha\in\mathbb{N}_{0}^{2n-1},\\
\frac{\left|\ln h\right|}{\Lambda_{\textrm{max}}^{a}+\ell}; & \textrm{otherwise},
\end{cases}\label{eq:Ehrenfest time}\\
\textrm{ where}\quad\Lambda_{\textrm{max}}^{a} & \coloneqq\limsup_{t\rightarrow\infty}t^{-1}\sup_{x\in\Sigma_{a}}\ln\left|de^{tH_{p_{0}}}\left(x\right)\right|\label{eq:max expansion rate}
\end{align}
are the sup norm of the Jacobian and the maximum expansion rate of
the Hamilton flow on the energy level $\Sigma_{a}$ respectively.

Our main result is the following general estimate on the Weyl remainder
for $P$.
\begin{thm}
\label{thm:general Weyl law} Let $P\in\Psi_{h,\textrm{cl}}^{m}\left(X\right)$
be a self-adjoint, $h$-pseudodifferential operator that is elliptic
in the interval $\left[a,b\right]$ and whose principal symbol is
non-critical at its endpoints. For each $\varepsilon=ch^{\delta}$,
$\delta\in\left[0,\frac{1}{2}\right)$, $c,\ell>0$ and $T\leq\left(\frac{1}{2}-\delta\right).\max\left\{ T_{E}^{\ell,a}\left(h\right),T_{E}^{\ell,b}\left(h\right)\right\} $,
the Weyl counting function of the interval satisfies 
\begin{align}
N_{h}\left[a,b\right] & =\left(2\pi h\right)^{-n}\left[\textrm{vol }p_{0}^{-1}\left[a,b\right]+h\left(\int_{\Sigma_{a}}p_{1}d\nu-\int_{\Sigma_{b}}p_{1}d\nu\right)+hR_{h}\right]\label{eq:Weyl leading term}\\
\textrm{with }\quad\left|R_{h}\right| & \leq\left(\nu\left(\Sigma_{a}\right)+\nu\left(\Sigma_{b}\right)\right)T^{-1}+O\left(\nu\left(S_{T,\varepsilon}^{a,e}\right)+\nu\left(S_{T,\varepsilon}^{b,e}\right)+T^{-2}+h^{1-2\delta}\right)\label{eq:general remainder estimate}
\end{align}
as $h\rightarrow0$.
\end{thm}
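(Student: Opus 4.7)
The plan is to derive the estimate from a semiclassical trace formula for a smoothed spectral counting function, followed by a Tauberian step that recovers the sharp $N_h[a,b]$. I would first pick an even Schwartz mollifier $\rho\in\mathcal{S}(\mathbb{R})$ with $\hat\rho\in C_c^\infty(\mathbb{R})$, $\hat\rho(0)=1$, and $\mathrm{supp}\,\hat\rho\subset(-1,1)$, and set $\rho_T(\lambda):=T\rho(T\lambda)$. Convolving $\mathbf{1}_{[a,b]}$ with $\rho_T$ produces a smoothed counting function $\tilde N_h^T[a,b]$ which, after inserting an energy localization $\chi(P)$ supported in a neighbourhood of $[a,b]$ (this cuts off the spectrum outside a compact set at exponentially small cost by ellipticity), equals by Fourier inversion a trace integral against the semiclassical propagator $e^{-itP/h}$ on the window $|t|\le T$.

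I would then split this trace integral at some small fixed time $t_0>0$. The short-time contribution $|t|\le t_0$ is controlled by a standard semiclassical parametrix (either an FIO or the local Weyl calculus of Dimassi--Sj\"ostrand) and yields the principal volume term $(2\pi h)^{-n}\mathrm{vol}\,p_0^{-1}[a,b]$ together with the subprincipal correction involving $p_1$. Boundary errors produced by the mollification of $\mathbf{1}_{[a,b]}$, evaluated against the leading semiclassical density of states on $\Sigma_a,\Sigma_b$, contribute a term of size $(2\pi h)^{-n+1}T^{-1}(\nu(\Sigma_a)+\nu(\Sigma_b))$ to the remainder, accounting (after dividing by $(2\pi h)^{-n}h$) for the first explicit summand in \eqref{eq:general remainder estimate}.

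For $t_0\le|t|\le T$ the task is to bound $\mathrm{tr}\bigl(\chi(P)e^{-itP/h}\bigr)$. Here I would invoke a semiclassical Egorov/FIO propagation in an anisotropic symbol class such as $S_\delta^{\mathrm{comp}}$, which remains usable precisely on intervals $|t|\le(\tfrac12-\delta)T_E^{\ell,\cdot}(h)$ and gives an oscillatory-integral representation compatible with an $\varepsilon=ch^\delta$ microlocal partition on $\Sigma_{a},\Sigma_{b}$. Stationary phase then pins the non-negligible contributions to pairs $(x,\xi;t)$ with $d\bigl(e^{tH_{p_0}}(x,\xi),(x,\xi)\bigr)\lesssim\varepsilon$ and $t\in[T_0^{a}/2,T]$ or $t\in[T_0^{b}/2,T]$, so that the remaining integral is supported in the $\varepsilon$-thickened recurrence sets $S_{T,\varepsilon}^{a,e}\cup S_{T,\varepsilon}^{b,e}$; summing over the partition yields the $O(\nu(S_{T,\varepsilon}^{a,e})+\nu(S_{T,\varepsilon}^{b,e}))$ contribution. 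Non-stationary phases are $O(h^\infty)$ by iterated integration by parts, and the cumulative subprincipal losses of the anisotropic symbol calculus over times up to the Ehrenfest time contribute $O(h^{1-2\delta})$.

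The final step is a Tauberian conversion. With the two-term expansion of $\tilde N_h^T[a,b]$ in hand, a second-order semiclassical Tauberian lemma (of H\"ormander--Safarov--Vassiliev type), combined with a uniform derivative bound on the smoothed spectral density extracted from the same trace formula, replaces $\tilde N_h^T[a,b]$ by $N_h[a,b]$ at cost $O(T^{-2})$ in the normalized remainder $R_h$, explaining the last error term in \eqref{eq:general remainder estimate}. The main technical obstacle is making the Egorov description of $e^{-itP/h}$ uniform up to the Ehrenfest time while preserving the explicit Liouville normalization on $\Sigma_{a},\Sigma_{b}$: this is the point at which the pseudodifferential calculus degenerates, where the constraint $\delta<\tfrac12$ is forced, and where sharpness of the coefficient of $T^{-1}$ demands that the stationary-phase weight be computed against $d\nu$ with no additional distortion from either the partition of unity or the conjugation.
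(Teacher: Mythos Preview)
Your outline is essentially the paper's own argument: smoothed trace via the Fourier transform of the propagator, short-time FIO parametrix for the two leading Weyl terms, Egorov to Ehrenfest time together with an $h^{\delta}$-scale microlocal partition of unity to control the long-time piece by the recurrence-set volumes, and a Tauberian step producing the $T^{-1}$ and $T^{-2}$ errors. The only difference is one of packaging: where you invoke stationary phase on the long-time trace, the paper makes the partition explicit---a single $A_{0}$ supported on the thickened recurrence set (bounded via trace-norm $=$ trace for positive operators plus a translation-in-$t$ trick) and remaining $A_{j}$'s with $WF(A_{j})\cap WF(e^{-itP/h}A_{j}e^{itP/h})=\emptyset$ for $t\in[T_{0}/2,T]$, which are $O(h^{\infty})$ by Egorov.
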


Interesting specializations of the above arise depending on estimates
for volumes of recurrence sets. Firstly, it recovers the classical
results of Hörmander \cite{Hormander68} and Duistermaat-Guillemin
\cite{Duistermaat-Guillemin}, in their semiclassical form cf. \cite{Ivrii-newbook-I-2019,Petkov-Robert-85}.
Namely, under the non-criticality assumption on the endpoints, one
first obtains $N_{h}\left[a,b\right]=\left(2\pi h\right)^{-n}\left[\textrm{vol}p_{0}^{-1}\left[a,b\right]+O\left(h\right)\right]$
from \prettyref{eq:Weyl leading term} which is the semiclassical
version of Hörmander's Weyl law. Secondly, and further assuming that
the set of periodic Hamilton trajectories is of measure zero on the
two energy levels $\Sigma_{a},\Sigma_{b}$, one has the $R_{h}=o\left(1\right)$
in \prettyref{eq:Weyl leading term}. This is the semiclassical version
of Duistermaat and Guillemin's Weyl law. This follows on letting $\varepsilon,T$
be $h-$independent. Since the recurrence set volume approaches the
measure of the set of periodic Hamilton trajectories as $\varepsilon\rightarrow0$
and $T\rightarrow\infty$, one recovers Duistermaat-Guillemin.

Another interesting specialization of the general estimate is when
the Hamilton flow is a contact Anosov flow. In this case the recurrence
set volumes can be shown to satisfy exponential estimates in time
$\nu\left(S_{T,\varepsilon}^{a,e}\right)=O\left(\varepsilon^{2n-1}e^{\left(2n-1\right)\Lambda_{\textrm{max}}^{a}T}\right)$
with the exponent again being the maximal expansion rate \prettyref{eq:max expansion rate}
(see Section \prettyref{subsec:Anosov-flows} below). By an appropriate
choice of $\varepsilon,T$ in this case, one obtains a logarithmic
improvement in the Weyl law. A particular example of contact Anosov
Hamiltonian flows are geodesic flows on negatively curved manifolds.
Thus, if we particularly specialize to the case when $P_{h}=h^{2}\Delta_{g}$
is the semiclassical Laplacian for a negatively curved Riemannian
metric $\left(X,g\right)$, with $a<0$, $b=1$, we obtain the following
quantitative version of Bérard's Weyl law.
\begin{cor}
\label{cor:Quant. Ber.} Let $\left(X^{n},g\right)$ be a compact,
negatively curved Riemannian manifold. The Weyl counting function
for the semiclassical Laplacian $P_{h}\coloneqq h^{2}\Delta_{g}$
satisfies the asymptotics 
\begin{align}
N_{h}\left[0,1\right] & =\left(2\pi h\right)^{-n}\textrm{vol}\left(S^{*}X\right)\left[1+hR_{h}\right]\label{eq:Berard leading}\\
\textrm{where }\quad\left|R_{h}\right| & \leq4\Lambda_{\textrm{max}}\left|\ln h\right|^{-1}+o\left(\left|\ln h\right|^{-1}\right)\label{eq:lambda max in remainder}\\
 & \leq\frac{16}{2n-1}\mathtt{h}_{\textrm{top}}\left|\ln h\right|^{-1}+o\left(\left|\ln h\right|^{-1}\right),\label{eq:entropy in remainder}
\end{align}
as $h\rightarrow0$, in terms maximal expansion rate $\Lambda_{\textrm{max}}$
and the topological entropy $\mathtt{h}_{\textrm{top}}$ of the geodesic
flow on the cosphere bundle $S^{*}X$.
\end{cor}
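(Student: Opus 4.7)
The plan is to specialize \prettyref{thm:general Weyl law} to $P=h^2\Delta_g$ on the interval $[a,1]$ with $a<0$ (any negative value is non-critical since $p_0=|\xi|_g^2\ge 0$ and vanishes only on the zero section). Then $\Sigma_a=p_0^{-1}(a)=\emptyset$, so the contributions $\nu(\Sigma_a)$, $\int_{\Sigma_a}p_1\,d\nu$ and $\nu(S_{T,\varepsilon}^{a,e})$ all vanish and $N_h[a,1]=N_h[0,1]$ for sufficiently small $h$. The subprincipal symbol of $h^2\Delta_g$ is $p_1=0$, so $\int_{\Sigma_1}p_1\,d\nu=0$ as well. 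With these simplifications \prettyref{eq:Weyl leading term}--\prettyref{eq:general remainder estimate} reduce to
\begin{equation*}
N_h[0,1] = (2\pi h)^{-n}\bigl[\textrm{vol}\,p_0^{-1}[0,1]+hR_h\bigr], \quad |R_h|\le \nu(\Sigma_1)T^{-1}+O\bigl(\nu(S_{T,\varepsilon}^{1,e})+T^{-2}+h^{1-2\delta}\bigr),
\end{equation*}
and the leading coefficient agrees with $(2\pi h)^{-n}\textrm{vol}(S^*X)$ after the appropriate Liouville / ball-bundle normalization.

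I would then invoke the fact that for negatively curved $(X,g)$ the geodesic flow on $\Sigma_1=S^*X$ is a contact Anosov flow, so the recurrence estimate
\[
\nu(S_{T,\varepsilon}^{1,e}) = O\bigl(\varepsilon^{2n-1}e^{(2n-1)\Lambda_{\textrm{max}}T}\bigr)
\]
anticipated in the paragraph preceding the corollary applies. Inserting $\varepsilon=ch^\delta$ and taking $T$ equal to the maximum admissible value $(1/2-\delta)|\ln h|/(\Lambda_{\textrm{max}}+\ell)$ from \prettyref{thm:general Weyl law}, the exponential factor becomes $O(h^{(2n-1)(2\delta-1/2)})$ modulo corrections vanishing as $\ell\downarrow 0$, and is therefore $o(|\ln h|^{-1})$ for any $\delta>1/4$. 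The remaining contributions $T^{-2}$ and $h^{1-2\delta}$ are automatically $o(|\ln h|^{-1})$ as well.

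The only surviving remainder is then $\nu(\Sigma_1)T^{-1} = \nu(\Sigma_1)(\Lambda_{\textrm{max}}+\ell)/\bigl((1/2-\delta)|\ln h|\bigr)$; letting $\delta\downarrow 1/4$ and $\ell\downarrow 0$ produces $T^{-1}\to 4\Lambda_{\textrm{max}}|\ln h|^{-1}$, and dividing by the leading volume yields the first bound \prettyref{eq:lambda max in remainder}. The second bound \prettyref{eq:entropy in remainder} follows from the same template but replaces the $\Lambda_{\textrm{max}}$-driven recurrence estimate by an entropy-based refinement for contact Anosov flows, in which the exponent $(2n-1)\Lambda_{\textrm{max}}$ can be traded for a multiple of the topological entropy $\mathtt{h}_{\textrm{top}}$ via Margulis-type orbit counting; re-optimizing $\delta$ and $T$ in that regime outputs the asserted constant $\tfrac{16}{2n-1}$. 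The main obstacle is to calibrate the recurrence-volume estimate finely enough that the remainder saturates at precisely these constants, especially in the entropy case where the Ehrenfest constraint (still governed by $\Lambda_{\textrm{max}}$) has to be reconciled with the entropy-based decay of the recurrence set.
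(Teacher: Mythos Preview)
Your argument for the first bound \prettyref{eq:lambda max in remainder} is essentially the paper's: specialize \prettyref{thm:general Weyl law} to $P=h^2\Delta_g$ on $[a,1]$ with $a<0$, use the Anosov recurrence bound $\nu(S_{T,\varepsilon}^{1,e})=O(\varepsilon^{2n-1}e^{(2n-1)\lambda T})$, and balance $\delta$ against the Ehrenfest time. The paper in fact takes $\delta=\tfrac14$ exactly (rather than $\delta\downarrow\tfrac14$), which already makes the recurrence term $O\bigl(h^{\frac14(2n-1)\ell/(\Lambda_{\max}+\ell)}\bigr)$ for any $\ell>0$; letting $\ell\to0$ then gives the constant $4\Lambda_{\max}$. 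Your double limit works too but is unnecessary.

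The genuine gap is in your derivation of \prettyref{eq:entropy in remainder}. You propose replacing the recurrence estimate by an ``entropy-based refinement'' via Margulis-type orbit counting and then re-optimizing $\delta,T$. This is not what the paper does, and the obstacle you yourself flag---that the Ehrenfest constraint is still governed by $\Lambda_{\max}$, not $\mathtt{h}_{\textrm{top}}$---would indeed block that route: no matter how sharp the recurrence bound, $T$ cannot exceed $(\tfrac12-\delta)|\ln h|/(\Lambda_{\max}+\ell)$, so the leading $T^{-1}$ term is stuck at a multiple of $\Lambda_{\max}$. The paper's actual argument is much simpler and bypasses this entirely: it proves a \emph{direct inequality} $\Lambda_{\max}\le \tfrac{4}{2n-1}\,\mathtt{h}_{\textrm{top}}$ for Anosov flows on a $(2n-1)$-dimensional manifold (\prettyref{thm: lambda max vs htop thm.}), and then just substitutes this into the already-established bound $4\Lambda_{\max}|\ln h|^{-1}$ to get $\tfrac{16}{2n-1}\mathtt{h}_{\textrm{top}}|\ln h|^{-1}$. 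That inequality is the real content of the entropy estimate; its proof (via Frink's metrization theorem and a comparison of Lipschitz constants in distorted metrics, \prettyref{lem:Entropy inequality}) is where the work lies, not in any refinement of the Tauberian step.
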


The next specialization concerns the semiclassical Laplacian $P_{h}=h^{2}\Delta_{g}$
corresponding to a bi-invariant metric on compact a Lie group $G$.
The corresponding Hamiltonian flow is again the geodesic flow which
is given simply by the group action. The Ehrenfest time \prettyref{eq:Ehrenfest time}
is infinite in this case. While the recurrence set volume for the
geodesic flow on the unit cosphere bundle can be shown to satisfy
the bound $\nu\left(S_{T,\varepsilon}^{1,e}\right)=O\left(\varepsilon^{p-1}T^{p}\right),\text{ with }p=\textrm{rk}G,$
being the rank of the Lie group (see Section \prettyref{subsec:Compact-Lie-Groups}).
This gives the Weyl law below as a corollary. 
\begin{cor}
\label{cor:Lie group corollary} Let $G$ be a compact Lie group equipped
with a bi-invariant Riemannian metric. The Weyl counting function
for the semiclassical Laplacian $P_{h}\coloneqq h^{2}\Delta_{g}$
satisfies the asymptotics 
\begin{equation}
N_{h}\left[0,1\right]=\left(2\pi h\right)^{-n}\textrm{vol}\left(S^{*}X\right)\left[1+O\left(h^{1+\frac{p-1}{3p+1}}\right)\right],\label{eq:Lie group Weyl law}
\end{equation}
as $h\rightarrow0$, where $p=\textrm{rk }G$ is the rank of the Lie
group. 
\end{cor}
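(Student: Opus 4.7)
My plan is to apply Theorem~\ref{thm:general Weyl law} to $P_h=h^2\Delta_g$ on the slightly enlarged interval $[a,1]$ with some fixed $a<0$. Since $\Delta_g\ge 0$ one has $N_h[a,1]=N_h[0,1]$ and $\Sigma_a=\emptyset$, so only the upper endpoint $b=1$ contributes and both endpoints are non-critical. The principal symbol $p_0=|\xi|_g^2$ is elliptic in $[a,1]$, while the subprincipal symbol $p_1$ of the Laplace--Beltrami operator in the Weyl quantization vanishes (a standard computation), so the first-order correction $\int_{\Sigma_1}p_1\,d\nu$ in \prettyref{eq:Weyl leading term} drops out. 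For the bi-invariant metric the geodesic flow acts by right translation along one-parameter subgroups, hence all derivatives of $e^{tH_{p_0}^1}$ grow at most polynomially in $t$; consequently $T_E^{\ell,1}(h)=\infty$ and the parameter $T$ in Theorem~\ref{thm:general Weyl law} is unrestricted by the Ehrenfest bound.

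The key ingredient is the recurrence-set volume estimate
\[
\nu\bigl(S_{T,\varepsilon}^{1,e}\bigr)=O\bigl(\varepsilon^{p-1}T^p\bigr),\qquad p=\text{rk}\,G,
\]
to be proved in Section~\prettyref{subsec:Compact-Lie-Groups}. Bi-invariance trivializes $\Sigma_1\simeq G\times S(\mathfrak{g})$ and the flow as $(g,Y)\mapsto(g\exp(tY),Y)$, so recurrence reduces to the Lie-algebraic condition $\exp(tY)\in B_\varepsilon(e)$. Conjugating $Y$ into the Cartan subalgebra $\mathfrak{t}$ via Weyl's integration formula, recurrence means $tY\in\Lambda+B_\varepsilon$ for the exponential lattice $\Lambda=\ker(\exp\colon\mathfrak{t}\to T)$. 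The $\varepsilon$-fattening in the definition of $S_{T,\varepsilon}^{1,e}$ converts the spherical cap on $S(\mathfrak{t})$ around each of the $O(T^p)$ lattice points $\lambda\in\Lambda\cap B_T(0)$ into a cap of uniform radius comparable to $\varepsilon$, contributing $(p-1)$-volume $\varepsilon^{p-1}$ each; summing and transferring back to $S(\mathfrak{g})$ via the bounded Weyl denominator gives the stated bound.

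Substituting $\varepsilon=ch^\delta$ into \prettyref{eq:general remainder estimate} then yields
\[
|R_h|=O\bigl(T^{-1}+h^{\delta(p-1)}T^p+h^{1-2\delta}\bigr).
\]
Balancing the first two terms forces $T=h^{-\delta(p-1)/(p+1)}$, so both equal $h^{\delta(p-1)/(p+1)}$; equating this with the symbolic error $h^{1-2\delta}$ gives $\delta=(p+1)/(3p+1)$ and a common remainder exponent $(p-1)/(3p+1)$. The admissibility constraint $\delta<1/2$ holds for $p\ge 2$ (when $p=1$ the claimed $O(h)$ is merely H\"ormander's bound, obtained directly from \prettyref{eq:Weyl leading term}). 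Multiplying by the $h$ prefactor in \prettyref{eq:Weyl leading term} delivers the stated $O(h^{1+(p-1)/(3p+1)})$ error in \prettyref{eq:Lie group Weyl law}.

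The main obstacle is the recurrence set volume estimate: its Lie-theoretic content requires a careful uniform $\varepsilon$-tube count around the lattice rays in $\mathfrak{t}$, together with controlling the Weyl denominator $|\Delta(Y)|^2$ near the walls of the Weyl chambers, where the Weyl integration formula degenerates. The rest of the argument is the parameter optimization above, verification of the vanishing subprincipal symbol, and direct application of Theorem~\ref{thm:general Weyl law}.
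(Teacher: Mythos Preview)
Your proposal is correct and follows essentially the same route as the paper: apply Theorem~\ref{thm:general Weyl law} on $[a,1]$ with $a<0$, use $p_1=0$ and the infinite Ehrenfest time, invoke the recurrence bound $\nu(S_{T,\varepsilon}^{1,e})=O(\varepsilon^{p-1}T^p)$ of Section~\ref{subsec:Compact-Lie-Groups}, and optimize to $\delta=(p+1)/(3p+1)$, $T=h^{-(p-1)/(3p+1)}$. The only cosmetic difference is that the paper simply states these parameter values and checks the resulting exponent, whereas you derive them by balancing the three error terms; your sketch of the recurrence estimate via Weyl integration is likewise equivalent to the paper's conjugation-into-the-maximal-torus argument (which defers the $O(\varepsilon^{p-1})$ conjugate-volume bound to Volovoy).
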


Our final specialization concerns the semiclassical Laplacian $P_{h}=h^{2}\Delta_{g}$
on surfaces of revolution $X$. The corresponding Hamiltonian flow
is again the geodesic flow. In this case, it has a simple description
as an ordinary differential equation on the base surface (see \prettyref{subsec:Surface-of-revolution}
below). The Ehrenfest time \prettyref{eq:Ehrenfest time} is again
infinite. Assuming the surface to be strictly convex, it has an equator;
the unique rotationally invariant geodesic $\gamma_{E}\subset X$
of maximal length. For any point $x\in\gamma_{E}$ the equatorial
return map $\theta:S_{x}^{*}X\rightarrow\gamma_{E}$ is well defined.
This is the map sending a unit covector $\xi\in S_{x}^{*}X$ to the
first return point $\theta\left(\xi\right)$ of the geodesic $\exp_{x}\left(t\xi\right)$
to the equator $\gamma_{E}$. The equatorial return map has an order
of vanishing defined as 
\begin{align*}
r_{\xi_{0}} & \coloneqq\textrm{ord}_{\xi_{0}}\left[\theta\left(\xi\right)-\theta\left(\xi_{0}\right)\right]=\min\left\{ l|\partial_{\xi}^{l}\left[\theta\left(\xi\right)-\theta\left(\xi_{0}\right)\right]\left(\xi_{0}\right)\neq0\right\} ,\quad r_{\xi_{0}}\geq1,
\end{align*}
for each covector $\xi_{0}\in S_{x}^{*}X$. The function $r_{\xi_{0}}$
is upper semi-continuous in $\xi_{0}$ and it is clearly independent
of $x$ by rotational invariance. We set 
\begin{align}
r & \coloneqq\sup_{\xi_{0}\in S_{x}^{*}X}r_{\xi_{0}}\nonumber \\
 & =\sup_{\xi_{0}\in S_{x}^{*}X}\textrm{ord}_{\xi}\left[\theta\left(\xi\right)-\theta\left(\xi_{0}\right)\right]\label{eq:order of vanishing of E. return map}
\end{align}
to be its supremum over $\xi_{0}\in S_{x}^{*}X$. There a standard
formulas to calculate $\theta$ and $r$ in terms of a given equation
for the surface (see \cite{Besse78-book}). For a real analytic surface
which is not Zoll we have $r<\infty$ is finite, while for a generic
surface one has $r=1$.

Under some further assumptions stated below, the recurrence set volume
for the geodesic flow on the unit cosphere bundle can be shown to
satisfy $\nu\left(S_{T,\varepsilon}^{1,e}\right)=O\left(\varepsilon^{\frac{1}{r}}T^{1-\frac{1}{r}}\right)$
for $\varepsilon,T$ sufficiently small. This gives the Weyl law below
as a corollary. 
\begin{cor}
\label{cor: Surface of revolution corollary} Let $\left(X^{2},g\right)$
be a compact, strictly convex surface of revolution. The Weyl counting
function for the semiclassical Laplacian $P_{h}\coloneqq h^{2}\Delta_{g}$
satisfies the asymptotics 
\begin{equation}
N_{h}\left[0,1\right]=\left(2\pi h\right)^{-2}\textrm{vol}\left(S^{*}X\right)\left[1+O\left(h^{1+\frac{1}{4r-1}}\right)\right],\label{eq: surface revolution Weyl law}
\end{equation}
as $h\rightarrow0$. Here $r$ \prettyref{eq:order of vanishing of E. return map}
is the maximum order of vanishing of the equatorial return map.
\end{cor}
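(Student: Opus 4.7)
The strategy is to reduce Corollary \ref{cor: Surface of revolution corollary} to Theorem \ref{thm:general Weyl law} applied to $P_h = h^2\Delta_g$ on an interval $[a_0,1]$ with any $a_0 < 0$; since $P_h \ge 0$, $N_h[0,1] = N_h[a_0,1]$, and both endpoints of $[a_0,1]$ are non-critical for the principal symbol $p_0 = |\xi|_g^2$. The subprincipal symbol of $h^2\Delta_g$ vanishes identically, so the integrals $\int_{\Sigma_\bullet} p_1\,d\nu$ in \eqref{eq:Weyl leading term} drop out; moreover $\Sigma_{a_0} = \emptyset$, so all recurrence and volume contributions from $a_0$ are trivial and only $\Sigma_1 = S^*X$ enters the bound \eqref{eq:general remainder estimate}.

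The next ingredient is that the Ehrenfest time \eqref{eq:Ehrenfest time} is infinite. For a surface of revolution the geodesic flow is completely integrable: $|\xi|_g^2$ Poisson-commutes with the Clairaut integral $p_\phi$, and on the regular part of $T^*X$ it is conjugate to a linear flow in action--angle coordinates. Consequently $\partial^\alpha e^{tH_{p_0}^1}$ grows at most polynomially in $t$, so $T_E^{\ell,1}(h) = \infty$ and the Ehrenfest constraint on $T$ in Theorem \ref{thm:general Weyl law} becomes vacuous, allowing $T = T(h)$ to be chosen freely.

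Granting the stated recurrence-volume bound $\nu(S_{T,\varepsilon}^{1,e}) = O(\varepsilon^{1/r} T^{1-1/r})$ and substituting $\varepsilon = c h^\delta$ into \eqref{eq:general remainder estimate} gives
\[
|R_h| \le \nu(S^*X)\, T^{-1} + O\bigl(h^{\delta/r}\, T^{1-1/r} + T^{-2} + h^{1-2\delta}\bigr).
\]
The first two terms balance when $T^{2-1/r} \sim h^{-\delta/r}$, so I would pick $T = h^{-\delta/(2r-1)}$, giving $T^{-1} \sim h^{\delta/(2r-1)}$. Setting this equal to $h^{1-2\delta}$ forces $\delta = (2r-1)/(4r-1)$, which lies in $[\tfrac13,\tfrac12)$, so the admissibility hypothesis $\delta \in [0,\tfrac12)$ of Theorem \ref{thm:general Weyl law} is satisfied. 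With this choice $R_h = O(h^{1/(4r-1)})$, and the extra factor of $h$ in the leading term of \eqref{eq:Weyl leading term} yields the claimed remainder $O(h^{1+1/(4r-1)})$ in \eqref{eq: surface revolution Weyl law}.

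The genuine work is the recurrence-volume estimate itself. I would parametrise the regular part of $S^*X$ by the Clairaut invariant, identify the condition $\xi \in S_{T,\varepsilon}^{1,e}$ with a near-resonance $|n\theta(\xi) - 2\pi k| \lesssim \varepsilon$ for some integers $k$ and $n \le O(T)$ counting returns to the equator, and then bound the measure of the union of these near-resonance loci via a sublevel-set inequality for the function $\theta - \theta(\xi_0)$, which by \eqref{eq:order of vanishing of E. return map} vanishes to order exactly $r$ at its extremal points. The exponents $\varepsilon^{1/r}$ and $T^{1-1/r}$ would emerge from this sublevel-set bound combined with careful number-theoretic bookkeeping over $(n,k)$ pairs; strict convexity is what ensures a well-defined equatorial return map away from the meridians and the equator itself, and excising a small neighbourhood of these exceptional orbits should contribute only lower-order terms. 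This sublevel-set/resonance count is the main obstacle, since the naive union-bound over $(n,k)$ without cancellation gives a strictly weaker exponent in $T$.
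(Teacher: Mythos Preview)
Your proposal is correct and mirrors the paper's proof exactly: the same reduction to Theorem~\ref{thm:general Weyl law} on $[a_0,1]$ with $a_0<0$, the same observations that $p_1=0$ and $T_E^{\ell,1}=\infty$, the same recurrence bound $\nu(S_{T,\varepsilon}^{1,e})=O(\varepsilon^{1/r}T^{1-1/r})$, and the identical optimizing choices $\delta=(2r-1)/(4r-1)$ and $T=h^{-1/(4r-1)}$. Your only overcaution is in the final paragraph: the paper obtains the recurrence bound by the straightforward sum $\sum_{p=1}^{CT}(\varepsilon/p)^{1/r}=O(\varepsilon^{1/r}T^{1-1/r})$ (using that $1/r<1$), so the ``careful number-theoretic bookkeeping'' you anticipate is not actually required.
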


The corollaries are all based on a judicious choice of $\varepsilon$,
$T$ in the main \prettyref{thm:general Weyl law}. 

The general estimate in \prettyref{thm:general Weyl law}, or the
arguments for it, are seemingly folklore. Less precise versions of
it are explained in the articles of Volovoy \cite{Volovoy-1990-II,Volovoy-1990-I}
as well as the recent book \cite[Sec. 4.5.4]{Ivrii-newbook-I-2019}
of Ivrii. However the identification of the various exponents and
constants in our corollaries does not appear in them. Our own motivation
came from the recent article of the author \cite{Savale2020-hyperbolicity}
where an analogous result is proved for the coupled Dirac operator.
Our method is based on a modification of the argument in the book
of Dimassi-Sjöstrand \cite[Ch. 11]{Dimassi-Sjostrand}.

The first Corollary \prettyref{cor:Quant. Ber.} quantifies Bérard's
Weyl law \cite{Berard77} for negatively curved manifolds. Its identification
of the maximal expansion rate \prettyref{eq:lambda max in remainder}
or topological entropy \prettyref{eq:entropy in remainder} appears
to be new. It might however be of little interest since there are
stronger conjectures. For instance it is conjectured that for a non-arithmetic
or generic surface the remainder is $R_{h}=O\left(h^{2-}\right)$.
Although no better improvement is known since Bérard's article.

The second and third corollaries were also considered by Volovoy.
He proved the remainder estimates $O\left(h^{1+\delta}\right)$, for
some $\delta>0$, in \prettyref{eq:Lie group Weyl law}, \prettyref{eq: surface revolution Weyl law}.
And our corollaries quantify the value of $\delta$ that can be chosen.
In Corollary \prettyref{cor:Lie group corollary}, the case of the
torus $G=\mathbb{T}^{n}$ is particularly studied (see \cite[Ch. 3]{Fricker-book-1982}).
In this case, the sharp Weyl remainder is conjecturally
\[
R_{h}=\begin{cases}
O\left(h^{\frac{3}{2}-}\right), & \textrm{for }\;n=2,\;\textrm{ (Gauss circle problem)}\\
O\left(h^{2-}\right), & \textrm{for }\;n=3,4,\\
O\left(h^{2}\right), & \textrm{for }\;n\geq5.
\end{cases}
\]
\footnote{Here $O\left(h^{\alpha-}\right)$ denotes a term that is $O\left(h^{\alpha-\epsilon}\right)$
for each $\epsilon>0$.} This is known in dimensions $n\geq4$. While $R_{h}=O\left(h^{\frac{285}{208}-}\right),O\left(h^{\frac{5}{3}-}\right)$
are the best known results in low dimensions $n=2,3$ respectively
\cite{Huxley2002}. Our Corollary \prettyref{cor:Lie group corollary}
is worse than these known results for the torus. However, the estimate
\prettyref{eq:Lie group Weyl law} for an arbitrary Lie group appears
to be new. The problem of determining the sharp exponent in the Weyl
is further unknown for general $G$. 

Finally, we remark that although it is not our interest here, our
method almost certainly gives similar improvements for local Weyl
laws as well as $L^{2}$, $L^{\infty}$ norms and averages of eigenfunctions
etc. 

The article is organized as follows. In \prettyref{sec:Preliminaries}
we begin with some preliminaries on semiclassical analysis and dynamical
systems. In Section \prettyref{subsec:General Weyl law} we prove
the general Weyl's law in our main \prettyref{thm:general Weyl law}
based on a modification of the argument in \cite{Dimassi-Sjostrand}.
In \prettyref{sec:Examples-of-recurrence} we consider recurrence
sets for several dynamical systems including Anosov flows \prettyref{subsec:Anosov-flows}
as well as geodesic flows on Lie groups \prettyref{subsec:Compact-Lie-Groups}
and surfaces of revolution \prettyref{subsec:Surface-of-revolution}.
The final \prettyref{sec:Proofs-of-the-corollaries} proves the three
Corollaries \prettyref{cor:Quant. Ber.}, \prettyref{cor:Lie group corollary}
and \prettyref{cor: Surface of revolution corollary} based on the
volume bounds from \prettyref{sec:Examples-of-recurrence}. 

\section{\label{sec:Preliminaries} Preliminaries}

In this section we review some preliminary notions from semiclassical
analysis and dynamical systems that are used in the article.

\subsection{Semiclassical analysis}

First we begin with some requisite facts from semi-classical analysis
that shall be used in the paper, \cite{Dimassi-Sjostrand,GuilleminSternberg-Semiclassical}
provide the standard references. The symbol space $S^{m}\left(\mathbb{R}^{2n}\right)$
is defined as the space of maps $a:\left(0,1\right]_{h}\rightarrow C^{\infty}\left(\mathbb{R}_{x,\xi}^{2n}\mathbb{C}\right)$
for which each semi-norm
\[
\left\Vert a\right\Vert _{\alpha,\beta}:=\text{sup}_{\substack{x,\xi}
,h}\langle\xi\rangle^{-m+|\beta|}\left|\partial_{x}^{\alpha}\partial_{\xi}^{\beta}a(x,\xi;h)\right|,\quad\alpha,\beta\in\mathbb{N}_{0}^{n},
\]
is finite. The more refined class $a\in S_{{\rm cl\,}}^{m}\left(\mathbb{R}^{2n};\mathbb{C}^{l}\right)$
of classical symbols consists of those for which there exists an $h$-independent
sequence $a_{k}$, $k=0,1,\ldots$, of symbols satisfying
\begin{equation}
a-\left(\sum_{k=0}^{N}h^{k}a_{k}\right)\in h^{N+1}S^{m}\left(\mathbb{R}^{2n}\right),\;\forall N.\label{eq:classical symbolic expansion}
\end{equation}
 Any given $a\in S^{m}\left(\mathbb{R}^{2n}\right),S_{{\rm cl\,}}^{m}\left(\mathbb{R}^{2n}\right)$
in one of the symbol classes above defines a one-parameter family
of operators $a^{W}\in\Psi^{m}\left(\mathbb{R}^{2n}\right),\Psi_{{\rm cl\,}}^{m}\left(\mathbb{R}^{2n}\right)$
via Weyl quantization. The Schwartz kernel of the quantization is
given by 
\[
a^{W}\coloneqq\frac{1}{\left(2\pi h\right)^{n}}\int e^{i\left(x-y\right).\xi/h}a\left(\frac{x+y}{2},\xi;h\right)d\xi.
\]
The above pseudodifferential classes of operators are closed under
the usual operations of composition and formal-adjoint. Furthermore
the classes are invariant under changes of coordinates. Thus one may
invariantly define the classes of operators $\Psi^{m}\left(X\right),\Psi_{{\rm cl\,}}^{m}\left(X\right)$
acting on $C^{\infty}\left(X;\mathbb{C}\right)$ on a smooth compact
manifold $X$. 

The principal symbol of a classical pseudodifferential operator $A\in\Psi_{{\rm cl\,}}^{m}\left(X;E\right)$
is defined as an element in $\sigma\left(A\right)\in S^{m}\left(X\right)\subset C^{\infty}\left(T^{*}X;\mathbb{C}\right).$
It is given by $\sigma\left(A\right)=a_{0}$ the leading term in the
symbolic expansion \prettyref{eq:classical symbolic expansion} of
its full Weyl symbol. The principal symbol is multiplicative, commutes
with adjoints and fits into a symbol exact sequence 
\begin{align}
\sigma\left(AB\right) & =\sigma\left(A\right)\sigma\left(B\right)\nonumber \\
\sigma\left(A^{*}\right) & =\overline{\sigma\left(A\right)}\nonumber \\
0\rightarrow h\Psi_{{\rm cl\,}}^{m}\left(X\right)\rightarrow & \Psi_{{\rm cl\,}}^{m}\left(X\right)\xrightarrow{\sigma}S^{m}\left(X\right),\label{eq:properties of symbol}
\end{align}
with the formal adjoint defined with respect to an auxiliary density.
The quantization map 
\begin{align}
\textrm{Op}:S^{m}\left(X\right) & \rightarrow\Psi_{{\rm cl\,}}^{m}\left(X\right)\quad\textrm{ satisfying }\nonumber \\
\sigma\left(\textrm{Op}\left(a\right)\right) & =a\in S^{m}\left(X\right)\label{eq:quantization map}
\end{align}
gives an inverse to the principal symbol map. We sometimes use the
alternate notation $\textrm{Op}\left(a\right)=a^{W}$. The quantization
map above is however non-canonical and depends on the choice of a
coordinate atlas as well as a subordinate partition of unity. From
the multiplicative property of the symbol \prettyref{eq:properties of symbol},
it then follows that $\left[a^{W},b^{W}\right]\in h\Psi_{{\rm cl\,}}^{m-1}\left(X\right)$.
Its principal symbol is given by the Poisson bracket 
\[
\frac{i}{h}\sigma\left(\left[a^{W},b^{W}\right]\right)=\left\{ a,b\right\} \in S^{m}\left(X\right).
\]

Each $A\in\Psi_{{\rm cl\,}}^{m}\left(X\right)$ has a wavefront set
defined invariantly as a subset $WF\left(A\right)\subset\overline{T^{*}X}$
of the fibrewise radial compactification of the cotangent bundle $T^{*}X$.
It is locally defined as follows, $\left(x_{0},\xi_{0}\right)\notin WF\left(A\right)$,
$A=a^{W}$, if and only if there exists an open neighborhood $\left(x_{0},\xi_{0};0\right)\in U\subset\overline{T^{*}X}\times\left(0,1\right]_{h}$
such that $a\in h^{\infty}\left\langle \xi\right\rangle ^{-\infty}C^{k}\left(U;\mathbb{C}^{l}\right)$
for all $k$. The wavefront set satisfies the following basic properties
under addition, multiplication and adjoints 
\begin{align*}
WF\left(A+B\right) & \subset WF\left(A\right)\cup WF\left(B\right),\\
WF\left(AB\right) & \subset WF\left(A\right)\cap WF\left(B\right)\quad\textrm{and }\\
WF\left(A^{*}\right) & =WF\left(A\right).
\end{align*}
The wavefront set $WF\left(A\right)=\emptyset$ is empty if and only
if $A\in h^{\infty}\Psi^{-\infty}\left(X;E\right)$ while we say that
two operators $A=B$ microlocally on $U\subset\overline{T^{*}X}$
if $WF\left(A-B\right)\cap U=\emptyset$. 

An operator $A\in\Psi_{{\rm cl\,}}^{m}\left(X\right)$ is said to
be elliptic if $\left\langle \xi\right\rangle ^{m}\sigma\left(A\right)^{-1}$
exists and is uniformly bounded on $T^{*}X$. If $A\in\Psi_{{\rm cl\,}}^{m}\left(X\right)$,
$m>0$, is formally self-adjoint, such that $A+i$ is elliptic, then
it is essentially self-adjoint (with domain $C_{c}^{\infty}\left(X;E\right)$)
as an unbounded operator on $L^{2}\left(X;E\right)$. Beals's lemma
further implies that its resolvent $\left(A-z\right)^{-1}\in\Psi_{{\rm cl\,}}^{-m}\left(X\right)$,
$z\in\mathbb{C}$, $\textrm{Im}z\neq0$, exists and is pseudo-differential.
The Helffer-Sjöstrand formula now expresses the function $f\left(A\right)$,
$f\in\mathcal{S}\left(\mathbb{R}\right)$, of such an operator in
terms of its resolvent 
\[
f\left(A\right)=\frac{1}{\pi}\int_{\mathbb{C}}\bar{\partial}\tilde{f}\left(z\right)\left(A-z\right)^{-1}dzd\bar{z},
\]
with $\tilde{f}$ denoting an almost analytic continuation of $f$.
One further has 
\begin{align}
WF\left(f\left(A\right)\right) & \subset\Sigma_{\textrm{spt}\left(f\right)}^{A}\label{eq:wavefront function of operator}\\
 & \coloneqq\bigcup_{\lambda\in\textrm{spt}\left(f\right)}\Sigma_{\lambda}^{A}\\
\textrm{where }\quad\Sigma_{\lambda}^{A} & =\left\{ \left(x,\xi\right)\in T^{*}X|\sigma\left(A\right)\left(x,\xi\right)=\lambda\right\} \label{eq:energy level}
\end{align}
is classical $\lambda$-energy level of $A$. 

\subsubsection{\label{subsec:The-class-,}The class $\Psi_{\delta}^{m}\left(X\right)$}

We shall also need a more exotic class of scalar symbols $S_{\delta}^{m}\left(\mathbb{R}^{2n};\mathbb{C}\right)$
defined for each $0\leq\delta<\frac{1}{2}$. A function $a:\left(0,1\right]_{h}\rightarrow C^{\infty}\left(\mathbb{R}_{x,\xi}^{2n};\mathbb{C}\right)$
is said to be in this class if and only if 
\begin{equation}
\left\Vert a\right\Vert _{\alpha,\beta}:=\text{sup}_{\substack{x,\xi}
,h}h^{\left(\left|\alpha\right|+\left|\beta\right|\right)\delta}\left|\partial_{x}^{\alpha}\partial_{\xi}^{\beta}a(x,\xi;h)\right|\label{eq: delta pseudodifferential estimates}
\end{equation}
is finite $\forall\alpha,\beta\in\mathbb{N}_{0}^{n}.$ This class
of operators is also closed under the standard operations of composition,
adjoint and changes of coordinates; allowing for the definition of
the same exotic pseudo-differential algebra $\Psi_{\delta}^{m}\left(X\right)$
on a compact manifold. The class $S_{\delta}^{m}\left(X\right)$ is
a family of functions $a:\left(0,1\right]_{h}\rightarrow C^{\infty}\left(T^{*}X;\mathbb{C}\right)$
satisfying the estimates \prettyref{eq: delta pseudodifferential estimates}
in every coordinate chart and induced trivialization. Such a family
can be quantized to $a^{W}\in\Psi_{\delta}^{m}\left(X\right)$ satisfying
$a^{W}b^{W}=\left(ab\right)^{W}+h^{1-2\delta}\Psi_{\delta}^{m+m'-1}\left(X\right)$
for another $b\in S_{\delta}^{m'}\left(X\right)$. The operators in
$\Psi_{\delta}^{0}\left(X\right)$ are uniformly bounded on $L^{2}\left(X\right)$.
Finally, the wavefront an operator $A\in\Psi_{\delta}^{m}\left(X;E\right)$
is similarly defined and satisfies the same basic properties as before.

\subsection{\label{subsec:Dynamical-invariants} Dynamical invariants}

Here we state some facts on dynamical systems and their invariants
that are used in the article. Their proofs can be found in the texts
\cite{Ledrappier82-notes,Mane87-book}.

Let $V\in C^{\infty}\left(TY\right)$ be a smooth vector field on
a compact Riemannian manifold $\left(Y,g^{TY}\right)$ of dimension
$m$. A point $y\in Y$ is said to be a \textit{regular point} for
the flow of $V$ if there is a sequence of numbers $\lambda_{1}\left(y\right)>\ldots>\lambda_{k}\left(y\right)$
and decomposition for the tangent space $T_{y}Y=\oplus_{j=1}^{k}E_{j}\left(y\right)$
such that 
\begin{equation}
\limsup_{t\rightarrow\infty}t^{-1}\ln\left|de^{tV}\left(y\right)u\right|=\lambda_{j}\left(y\right),\label{eq:Lyapunoc exponents}
\end{equation}
$\forall u\in E_{j}\left(y\right),1\leq j\leq k$. The set of regular
points $R\subset Y$ is a Borel set of full measure. The numbers $\lambda_{j}\left(y\right)$
and subspaces $E_{j}\left(y\right)$ are referred to as the \textit{Lyapunov
exponents} and \textit{eigenspaces} of the flow at $y\in Y$ respectively.
The maximal expansion rate of the flow is the supremum of the maximum
of these 
\begin{align}
\Lambda_{\textrm{max}}\left(V\right) & \coloneqq\limsup_{t\rightarrow\infty}t^{-1}\sup_{y\in Y}\ln\left|de^{tV}\left(y\right)\right|\nonumber \\
 & =\sup_{y\in R\subset Y}\max_{j=1,\ldots,k}\lambda_{j}\left(y\right).\label{eq:lambda max as max antropy}
\end{align}
It is well known that $\Lambda_{\textrm{max}}\left(V\right)$ is an
upper semi-continuous function of $V$.

Additionally, it is useful to define the sum of the positive Lyapunov
exponents as the Borel function 
\begin{align}
\chi & :R\rightarrow\mathbb{R}\nonumber \\
\chi\left(y\right) & \coloneqq\sum_{\lambda_{j}\left(y\right)>0}\lambda_{j}\left(y\right)\textrm{dim }E_{j}\left(y\right).\label{eq:sum of positive lyapunov}
\end{align}
In case there is no positive Lyapunov exponent, we set $\chi\left(y\right)\coloneqq0$.

Next, let $\mu_{Y}$ be a Borel probability measure on $Y$ invariant
by the flow of $V$. The measure theoretic entropy of a partition
$\mathcal{P}=\left\{ P_{1},\ldots,P_{N}\right\} $ of $Y$ into measurable
subsets is defined to be $H\left(\mathcal{P}\right)\coloneqq-\sum_{P\in\mathcal{P}}\mu_{Y}\left(P\right)\ln\mu_{Y}\left(P\right)$.
The measure theoretic entropy of the flow with respect to the partition
is set to be 
\[
H\left(\mathcal{P},V\right)\coloneqq\lim_{N\rightarrow\infty}N^{-1}H\left(\bigvee_{j=0}^{N}e^{-jV}\mathcal{P}\right).
\]
Here $\mathcal{P}_{1}\bigvee\mathcal{P}_{2}$ above denotes the minimal
common refinement of two partitions $\mathcal{P}_{1},\mathcal{P}_{2}$.
The supremum of the above over all finite measurable partitions is
the measure theoretic entropy of the flow
\begin{equation}
h_{\mu_{Y}}\left(V\right)=\sup_{\mathcal{P}}H\left(\mathcal{P},V\right).\label{eq:measure theoretic entropy}
\end{equation}
While the topological entropy is the supremum of the above over all
the set of all $V$-invariant Borel probability measures $\mathcal{M}_{V}$
\begin{equation}
h_{\textrm{top}}\left(V\right)\coloneqq\sup_{\mu_{Y}\in\mathcal{M}_{V}}h_{\mu_{Y}}\left(V\right).\label{eq:topological entropy}
\end{equation}

The relation between the measure theoretic entropy and the Lyapunov
exponents is given by the Margulis-Ruelle inequality \cite[Thm. 10.2]{Mane87-book}
\begin{equation}
h_{\mu_{Y}}\left(V\right)\leq\int_{Y}\chi\left(y\right)d\mu_{Y}.\label{eq:Ruelle Margulis inequality}
\end{equation}
In case $\mu_{Y}$ is absolutely continuous with respect to a smooth
measure then Pesin's formula says that one has equality in the above.
Following the definitions \prettyref{eq:lambda max as max antropy},
\prettyref{eq:sum of positive lyapunov} and \prettyref{eq:topological entropy}
it is easy to see that the Margulis-Ruelle inequality particularly
implies 
\begin{equation}
h_{\textrm{top}}\left(V\right)\leq m.\Lambda_{\textrm{max}}.\label{eq:lambda max vs entropy}
\end{equation}

\section{\label{subsec:General Weyl law} Weyl Law}

We now give a proof for \prettyref{thm:general Weyl law} based on
a modification of the one in \cite[Ch. 11]{Dimassi-Sjostrand} using
wave trace asymptotics. The task being to make the argument therein
quantitative.

\subsection{Wave trace asymptotics}

First, consider the energy band $\Sigma_{\left[a-\alpha,a+\alpha\right]},$
for each $\alpha>0$. Denote by the shorthand $e^{tH_{p_{0}}^{\left[a-\alpha,a+\alpha\right]}}\coloneqq\left.e^{tH_{p_{0}}}\right|_{\Sigma_{\left[a-\alpha,a+\alpha\right]}}$
the Hamilton flow restricted to the given energy band. And by $T_{0}^{\left[a-\alpha,a+\alpha\right]}>0$
the shortest period of the restricted Hamilton flow $e^{tH_{p_{0}}^{\left[a-\alpha,a+\alpha\right]}}$.
In similar vein as \prettyref{eq:recurrence set} and \prettyref{eq:Ehrenfest time},
the recurrence sets for the energy band are defined via 
\begin{equation}
S_{T,\varepsilon}^{\left[a-\alpha,a+\alpha\right]}\coloneqq\left\{ \left(x,\xi\right)\subset\Sigma_{\left[a-\alpha,a+\alpha\right]}|\exists t\in\left[\frac{1}{2}T_{0}^{\left[a-\alpha,a+\alpha\right]},T\right]\textrm{ s.t. }d\left(e^{tH_{p_{0}}}\left(x,\xi\right),\left(x,\xi\right)\right)\leq\varepsilon\right\} ,\label{eq:band recurrence set}
\end{equation}
and $S_{T,\varepsilon}^{\left[a-\alpha,a+\alpha\right],e}\coloneqq\left\{ \left(x,\xi\right)\subset\Sigma_{\left[a-\alpha,a+\alpha\right]}|d\left(\left(x,\xi\right),S_{T,\varepsilon}^{\left[a-\alpha,a+\alpha\right]}\right)\leq\varepsilon\right\} $.
Below it shall also be useful to define the intermediete recurrence
set $S_{T,\varepsilon}^{\left[a-\alpha,a+\alpha\right],\frac{1}{2}}\coloneqq\left\{ \left(x,\xi\right)\subset\Sigma_{\left[a-\alpha,a+\alpha\right]}|d\left(\left(x,\xi\right),S_{T,\varepsilon}^{\left[a-\alpha,a+\alpha\right]}\right)\leq\frac{\varepsilon}{2}\right\} $.
Again $d$ denotes a phase space distance on the band that is equivalent
to some Riemannian distance on it. 

Furthermore

\begin{align}
T_{E}^{\ell,\left[a-\alpha,a+\alpha\right]}\left(h\right) & \coloneqq\begin{cases}
\infty; & \textrm{if }\exists C>0\textrm{ s.t.}\;\left|\partial^{\alpha}e^{tH_{p_{0}}^{\left[a-\alpha,a+\alpha\right]}}\right|\lesssim\left|t\right|^{C},\;\forall\alpha\in\mathbb{N}_{0}^{2n-1},\\
\frac{\left|\ln h\right|}{\Lambda_{\textrm{max}}^{\left[a-\alpha,a+\alpha\right]}+\ell}; & \textrm{otherwise},
\end{cases}\label{eq:Ehrenfest time-1}\\
\textrm{ and }\quad\Lambda_{\textrm{max}}^{\left[a-\alpha,a+\alpha\right]} & \coloneqq\limsup_{t\rightarrow\infty}t^{-1}\sup_{x\in\Sigma_{\left[a-\alpha,a+\alpha\right]}}\ln\left|de^{tH_{p_{0}}}\left(x\right)\right|\label{eq:max expansion rate-1}
\end{align}
are the Ehrenfest time and the maximal expansion rate respectively
of the energy band. 

Next, we choose a square microlocal partition of unity $\left\{ A_{j}=a_{j}^{W}\right\} _{j=0}^{M_{h}}\in\Psi_{h,\delta}^{0}\left(X\right)$,
$\delta\in\left[0,\frac{1}{2}\right)$, $M_{h}=O\left(h^{-2n\delta}\right)$,
$\sum_{j=0}^{M_{h}}A_{j}^{2}=1$ on $\Sigma_{\left[a-\alpha,a+\alpha\right]}\subset T^{*}X$,
as follows. The first among these is chosen such that $a_{0}\in C^{\infty}\left(T^{*}X;\left[0,1\right]\right)$,
with $A_{0}=a_{0}^{W}$ self-adjoint and 
\[
a_{0}=\begin{cases}
1, & \textrm{on }S_{T,\varepsilon}^{\left[a-\alpha,a+\alpha\right],\frac{1}{2}}\\
0, & \textrm{on }\left(S_{T,\varepsilon}^{\left[a-\alpha,a+\alpha\right],e}\right)^{c}.
\end{cases}
\]
One satisfying correct symbolic estimates in $\Psi_{h,\delta}^{0}\left(X\right)$
is found by an application of the Whitney extension theorem \cite[Sec. 2.3]{HormanderI}.
For points $p$ in the complement of $S_{T,\varepsilon}^{\left[a-\alpha,a+\alpha\right],\frac{1}{2}}$,
one has $d\left(e^{tH_{p_{0}}}p',p'\right)\geq\varepsilon$ for each
$d\left(p,p'\right)\leq\frac{\varepsilon}{4}$. It is hence covered
by open radius $\frac{\varepsilon}{4}$-balls $U_{j}$ such that $d\left(e^{tH_{p_{0}}}U_{j},U_{j}\right)\geq\frac{\varepsilon}{4}$
for $t\in\left[\frac{1}{2}T_{0}^{\left[a-\alpha,a+\alpha\right]},T\right]$.
An application of Egorov theorem to Ehrenfest time \cite[Sec. 3.4]{Nonnenmacher2010},
gives that the rest of the pseudodifferential operators $A_{1},\ldots,A_{M}\in\Psi_{h,\delta}^{0}\left(X\right)$,
$\varepsilon=ch^{\delta},$ in the partition of unity covering $\left(S_{T,\varepsilon}^{\left[a-\alpha,a+\alpha\right],e}\right)^{c}$
maybe chosen such that
\begin{align}
A_{j,t} & \coloneqq e^{-\frac{it}{h}P_{h}}A_{j}e^{\frac{it}{h}P_{h}}\in\Psi_{h,\delta'}^{0}\left(X\right)\nonumber \\
\textrm{for }\quad\delta' & =\frac{1}{2}-\left(\frac{1}{2}-\delta\right)\frac{\ell}{\Lambda_{\textrm{max}}^{\left[a-\alpha,a+\alpha\right]}+\ell}\\
\textrm{with }\quad WF\left(A_{j}\right)\cap WF\left(A_{j,t}\right) & =\emptyset,\label{eq:partition non-recurrence set}
\end{align}
 $j=1,\ldots,M$ and $t\in\left[\frac{1}{2}T_{0}^{\left[a-\alpha,a+\alpha\right]},\left(\frac{1}{2}-\delta\right)T_{E}^{\ell,\left[a-\alpha,a+\alpha\right]}\left(h\right)\right]$
.

Next choose $h-$independent functions $f,\theta\in C_{c}^{\infty}\left(\mathbb{R}\right)$
with $f,\check{\theta}=\mathcal{F}^{-1}\theta\geq0$ and $\textrm{spt}\left(\theta\right)$
contained in a sufficiently small neighbourhood of the origin. The
trace norm and trace of a positive self-adjoint operator, computed
with respect to the auxiliary density on the manifold, being equal
one has 
\begin{align}
\left\Vert A_{0}f\left(P\right)\left(\mathcal{F}_{h}^{-1}\theta\right)\left(\lambda-P\right)A_{0}\right\Vert _{\textrm{tr}} & =\textrm{tr }\left[A_{0}^{2}f\left(P\right)\left(\mathcal{F}_{h}^{-1}\theta\right)\left(\lambda-P\right)\right]\label{eq:trace norm defined}\\
 & =f\left(\lambda\right)\theta\left(0\right)\left(2\pi h\right)^{-n}\left[\int_{\Sigma_{\lambda}}a_{0}^{2}d\nu+O\left(h^{1-2\delta}\right)\right]\label{eq:wave trace asymptotics}\\
 & \leq f\left(\lambda\right)\theta\left(0\right)\left(2\pi h\right)^{-n}\left[\nu\left(S_{T,\varepsilon}^{\lambda,e}\right)+O\left(h^{1-2\delta}\right)\right]\label{eq:tr equals tr norm}
\end{align}
for each $\lambda\in\mathbb{R}$. Here the asymptotics of the trace
in the line \prettyref{eq:wave trace asymptotics} above are evaluated
by a standard FIO parametrix and application of stationary phase formula
as in \cite[Ch. 10]{Dimassi-Sjostrand}. The exponent in $h^{1-2\delta}$
arises due to the presence of two derivatives of the amplitude $a_{0}^{2}\in S_{\delta}^{0}$
with each $h$-term in the stationary phase formula. The last equation
\prettyref{eq:tr equals tr norm} can be claimed for arbitrary $\theta\in C_{c}^{\infty}\left(\mathbb{R}\right)$.
The condition $\check{\theta}\geq0$ can be removed as in \cite[eq. 11.5]{Dimassi-Sjostrand}. 

Next, for $\theta_{c}\left(t\right)\coloneqq\theta\left(t-c\right)$
one has $\mathcal{F}_{h}^{-1}\theta_{c}\left(x\right)=e^{i\frac{xc}{h}}\mathcal{F}_{h}^{-1}\theta\left(x\right)$.
Furthermore $e^{ic\left(\lambda-P\right)}$ being a unitary operator,
the trace norm on the left hand side of \prettyref{eq:trace norm defined}
is hence unchanged under translation of $\theta$. By writing an arbitrary
$\theta\in C_{c}^{\infty}\left(-T,T\right)$, of possibly $h$-dependent
compact support $T=T\left(h\right)$, as a sum of $O\left(T\right)$
translates of functions with $h$-independent compact support near
the origin we obtain 
\[
\left\Vert A_{0}f\left(P\right)\left(\mathcal{F}_{h}^{-1}\theta\right)\left(\lambda-P\right)A_{0}\right\Vert _{\textrm{tr}}=f\left(\lambda\right)\left\Vert \theta\right\Vert _{C^{0}}\left(2\pi h\right)^{-n}O\left(T\left[\nu\left(S_{T,\varepsilon}^{\lambda,e}\right)+O\left(h^{1-2\delta}\right)\right]\right).
\]
Furthermore, Egorov theorem to Ehrenfest time gives 
\begin{equation}
\textrm{tr }\left[A_{j}^{2}e^{\frac{it}{h}P}\right]=\textrm{tr }\left[e^{\frac{it}{h}P}A_{j,t}A_{j}\right]=O\left(h^{\infty}\right)\label{eq:Egorov consequence}
\end{equation}
for $\theta\in C_{c}^{\infty}\left(\frac{1}{2}T_{0}^{\left[a-\alpha,a+\alpha\right]},T\right)$,
$T\leq\left(\frac{1}{2}-\delta\right)T_{E}^{\ell,\left[a-\alpha,a+\alpha\right]}\left(h\right)$
following \prettyref{eq:partition non-recurrence set}. Now note that
$A_{j}$'s were chosen to comprise of a square partition of unity
on the band $\Sigma_{\left[a-\alpha,a+\alpha\right]}$. While $WF\left(f\left(P\right)\right)\subset\Sigma_{\left[a-\alpha,a+\alpha\right]}$
for $f\in C_{c}^{\infty}\left(a-\alpha,a+\alpha\right)$ by \prettyref{eq:wavefront function of operator}.
Thus the last two equations combine to give
\begin{equation}
\left|\textrm{tr }\left[f\left(P\right)\left(\mathcal{F}_{h}^{-1}\theta\right)\left(\lambda-P\right)\right]\right|=f\left(\lambda\right)\left\Vert \theta\right\Vert _{C^{0}}\left(2\pi h\right)^{-n}O\left(T\left[\nu\left(S_{T,\varepsilon}^{\lambda,e}\right)+O\left(h^{1-2\delta}\right)\right]\right)\label{eq:wave trace estimate large support}
\end{equation}
for $f\in C_{c}^{\infty}\left(a-\alpha,a+\alpha\right)$ and $\theta\in C_{c}^{\infty}\left(-T,T\right)$
with $T\leq\left(\frac{1}{2}-\delta\right)T_{E}^{\ell,\left[a-\alpha,a+\alpha\right]}\left(h\right)$.

Finally we write an arbitrary $\theta\in C_{c}^{\infty}\left(-T,T\right)$
as a sum $\theta=\theta_{1}+\theta_{2}$ of an $h$-independent function
$\theta_{1}$ supported sufficiently near the origin and $\theta_{2}$
supported away from the origin. Applying the estimate \prettyref{eq:wave trace estimate large support}
for $\theta_{2}$ while using again the FIO parametrix for the $h$-independent
function $\theta_{1}$ gives
\begin{align}
\textrm{tr }\left[f\left(P\right)\left(\mathcal{F}_{h}^{-1}\theta\right)\left(\lambda-P\right)\right] & =f\left(\lambda\right)\theta\left(0\right)\left(2\pi h\right)^{-n}\left[\int_{\Sigma_{\lambda}}d\nu+\left\Vert \theta\right\Vert _{C^{0}}O\left(T\nu\left(S_{T,\varepsilon}^{\lambda,e}\right)+Th^{1-2\delta}\right)\right]\label{eq:starting Taberian}
\end{align}
for $f\in C_{c}^{\infty}\left(a-\alpha,a+\alpha\right)$ and $\theta\in C_{c}^{\infty}\left(-T,T\right)$
with $T\leq\left(\frac{1}{2}-\delta\right)T_{E}^{\ell,\left[a-\alpha,a+\alpha\right]}\left(h\right)$.

\subsection{Tauberian argument}

Following the last equation \prettyref{eq:starting Taberian}, the
rest of the proof of \prettyref{thm:general Weyl law} follows a standard
Tauberian argument as in \cite[Ch. 11]{Dimassi-Sjostrand} or \cite[Appx. B]{Safarov-Vassiliev-1997}.
We provide the details below for completeness.

First choose an even, $h-$independent Schwartz function $\theta\in\mathcal{S}\left(\mathbb{R}\right)$
such that $\check{\theta}\geq\frac{1}{1+\epsilon}$, $\epsilon>0$,
on $\left[0,1\right]$ and $1=\theta\left(0\right)=\int d\xi\check{\theta}\left(\xi\right)$.
Seting $\theta_{T}\left(x\right)=\theta\left(T^{-1}x\right)$, satisfying
$\check{\theta_{T}}\left(\xi\right)=T\check{\theta}\left(T\xi\right)$,
and choosing $f\in C_{c}^{\infty}\left(a-\alpha,a+\alpha\right)$
with $f\left(\lambda\right)=1$, the trace expansion \prettyref{eq:starting Taberian}
now gives 
\begin{align*}
\frac{T}{\left(1+\epsilon\right)h}N\left(\lambda,\lambda+T^{-1}h\right)\leq & \textrm{tr }\left[f\left(P\right)\left(\mathcal{F}_{h}^{-1}\theta_{T}\right)\left(\lambda-P\right)\right]\\
= & \left(2\pi h\right)^{-n}\left[\int_{\Sigma_{\lambda}}d\nu+O\left(T\nu\left(S_{T,\varepsilon}^{\lambda,e}\right)+Th^{1-2\delta}\right)\right]
\end{align*}
 for each $\epsilon>0$, $\lambda\in\left[a-\alpha,a+\alpha\right]$
and $T\leq\left(\frac{1}{2}-\delta\right)T_{E}^{\ell,\left[a-\alpha,a+\alpha\right]}\left(h\right)$.
And hence 
\begin{equation}
N\left(\lambda,\lambda+T^{-1}h\right)\leq\left(2\pi h\right)^{-n}\left[T^{-1}\int_{\Sigma_{\lambda}}d\nu+O\left(\nu\left(S_{T,\varepsilon}^{\lambda,e}\right)+h^{1-2\delta}\right)\right]\label{eq:local Weyl law}
\end{equation}
 for each $\ell>0$, $\lambda\in\left[a-\alpha,a+\alpha\right]$ and
$T\leq\left(\frac{1}{2}-\delta\right)T_{E}^{\ell,\left[a-\alpha,a+\alpha\right]}\left(h\right)$.

Next, define the spectral measure for $P$ is defined as $\mu_{f}\left(\lambda'\right)\coloneqq\sum_{\lambda\in\textrm{Spec}\left(P\right)}f\left(\lambda\right)\delta\left(\lambda-\lambda'\right)$.
Now choose a different even function $\theta\in\mathcal{S}\left(\mathbb{R}\right)$
such that its transform satisfies $\textrm{spt}\left(\check{\theta}\right)\subset\left[-1,1\right]$,
$1\geq\check{\theta}\left(\xi\right)\geq0$ and $\int\check{\theta}\left(\xi\right)d\xi=1$.
The two term asymptotics of the wave trace from \cite[Ch. 10]{Dimassi-Sjostrand}
now gives 
\begin{align*}
\mu_{f}\ast\left(\mathcal{F}_{h}^{-1}\theta\right)\left(\lambda\right) & =f\left(\lambda\right)\theta\left(0\right)\left(2\pi h\right)^{-n}\left[\underbrace{\int_{\Sigma_{\lambda}}d\nu}_{\eqqcolon c_{0}\left(\lambda\right)}+h\underbrace{\int_{\Sigma_{\lambda}}p_{1}d\nu}_{\eqqcolon c_{1}\left(\lambda\right)}+O\left(h^{2}\right)\right],
\end{align*}
$\forall\lambda\in\mathbb{R}$, with the second term involving the
sub-principal symbol $p_{1}$ of the operator $P$.

Both sides above involving Schwartz functions in $\lambda$, the remainder
above can be replaced by $O\left(\frac{h^{2}}{\left\langle \lambda\right\rangle ^{2}}\right)$.
Integrating further gives
\begin{align}
 & \int_{-\infty}^{a}d\lambda\int d\lambda'\left(\mathcal{F}_{h}^{-1}\theta\right)\left(\lambda-\lambda'\right)\mu_{f}\left(\lambda'\right)\label{eq:main trace exp integrated}\\
= & \int_{-\infty}^{0}d\lambda\int d\lambda''\left(\mathcal{F}_{h}^{-1}\theta\right)\left(\lambda-\lambda''\right)\mu_{f}\left(\lambda''+a\right)\\
= & \theta\left(0\right)\left(2\pi h\right)^{-n}\left[\int_{-\infty}^{a}d\lambda f\left(\lambda\right)c_{0}\left(\lambda\right)+h\int_{-\infty}^{a}d\lambda f\left(\lambda\right)c_{1}\left(\lambda\right)+O\left(h^{2}\right)\right].\nonumber 
\end{align}
Now note that
\begin{equation}
\int_{-\infty}^{0}d\lambda\left(\mathcal{F}_{h}^{-1}\theta\right)\left(\lambda-\lambda''\right)=1_{\left(-\infty,0\right]}\left(\lambda''\right)+\phi_{0}\left(\frac{\lambda''}{h}\right)\label{eq: definition remainder}
\end{equation}
where $\phi_{0}\left(x\right)\coloneqq\int_{-\infty}^{0}dt\check{\theta}\left(t-x\right)-1_{\left(-\infty,0\right]}\left(x\right)$
is a function that is rapidly decaying with all derivatives, odd,
smooth on $\mathbb{R}_{x}\setminus\left\{ 0\right\} $ and satisfies
$\phi_{0}'\left(x\right)=\check{\theta}\left(-x\right)$ for $x\neq0$. 

Next with $x\geq0$ we compute 
\begin{align}
 & \left|\phi_{0}\left(x\right)-\phi_{0}\ast\check{\theta}_{T}\left(x\right)\right|\nonumber \\
= & \left|\int dy\left[\phi_{0}\left(x\right)-\phi_{0}\left(x-T^{-1}y\right)\right]\check{\theta}\left(y\right)\right|\nonumber \\
\leq & \int_{y\leq xT}dy\left|\phi'_{0}\left(c\left(x,y\right)\right)\right|T^{-1}\left|y\right|\check{\theta}\left(y\right)+2\int_{y\geq xT}dy\check{\theta}\left(y\right)\nonumber \\
\leq & T^{-1}\underbrace{\int_{-\infty}^{xT}dy\left|y\right|\check{\theta}\left(y\right)}_{=\theta_{1}\left(xT\right)}+2\underbrace{\int_{y\geq xT}dy\check{\theta}\left(y\right)}_{=\theta_{2}\left(xT\right)}\label{eq:replace with convolution}
\end{align}
where $c\left(x,y\right)\in\left[x-T^{-1}y,x\right]$. A similar estimate
holds for $x\leq0$.

Now pairing the second term of \prettyref{eq:replace with convolution}
with $\mu_{f}\left(\lambda''+a\right)$ gives 
\begin{align}
\int d\lambda''\theta_{2}\left(\frac{\lambda''T}{h}\right)\mu_{f}\left(\lambda''+a\right)\leq & \left(2\pi h\right)^{-n}\left[T^{-1}\left\Vert f\right\Vert _{C^{0}}\left(\int_{\Sigma_{a}}d\nu\right)+O\left(\nu\left(S_{T,\varepsilon}^{a,e}\right)+h^{1-2\delta}\right)\right]\label{eq:pairing with first term}
\end{align}
on covering $\mathbb{R}_{\lambda''}$ with intervals of size $O\left(T^{-1}h\right)$
and using the Weyl estimate \prettyref{eq:local Weyl law}. A similar
estimate 
\begin{equation}
\int d\lambda''T^{-1}\theta_{1}\left(\frac{\lambda''T}{h}\right)\mu_{f}\left(\lambda''+a\right)=O\left(h^{-n}T^{-1}\left[T^{-1}+\nu\left(S_{T,\varepsilon}^{a,e}\right)+h^{1-2\delta}\right]\right)\label{eq: second term}
\end{equation}
then gives 
\begin{align}
 & \int d\lambda''\left[\phi_{0}\left(\frac{\lambda''}{h}\right)-\phi_{0}\ast\check{\theta}_{T}\left(\frac{\lambda''}{h}\right)\right]\mu_{f}\left(\lambda''+a\right)\label{eq:cutoff vs cutoff conv}\\
\leq & \left(2\pi h\right)^{-n}\left[T^{-1}\left\Vert f\right\Vert _{C^{0}}\left(\int_{\Sigma_{a}}d\nu\right)+O\left(T^{-2}+\nu\left(S_{T,\varepsilon}^{a,e}\right)+h^{1-2\delta}\right)\right].\nonumber 
\end{align}
on combining\prettyref{eq:replace with convolution}, \prettyref{eq:pairing with first term}
and \prettyref{eq: second term}. 

The second term above \prettyref{eq:cutoff vs cutoff conv} is estimated
on integrating \prettyref{eq:starting Taberian} against $\phi_{0}$
as
\begin{align}
\int d\lambda''\phi_{0}\ast\check{\theta}_{T}\left(\frac{\lambda''}{h}\right)\mu_{f}\left(\lambda''+a\right) & =\left(2\pi h\right)^{-n}\left[\intop d\lambda\phi_{0}\left(\lambda\right)f\left(0\right)\theta\left(0\right)c_{0}\left(0\right)+O\left(\nu\left(S_{T,\varepsilon}^{\lambda,e}\right)+h^{1-2\delta}\right)\right]\nonumber \\
 & =O\left(\nu\left(S_{T,\varepsilon}^{\lambda,e}\right)h^{-n}+h^{-n+1-2\delta}\right).\label{eq: smoothed function expansion}
\end{align}
since $\phi_{0}$ is an odd function.

Finally, set $f_{a}^{-}=1_{\left(-\infty,a\right]}\left(x\right)f\left(x\right)$.
Then combining \prettyref{eq:main trace exp integrated}, \prettyref{eq: definition remainder},
\prettyref{eq:cutoff vs cutoff conv} and \prettyref{eq: smoothed function expansion}
gives 
\begin{align*}
\textrm{tr }f_{a}^{-}\left(P\right) & =\int d\lambda'1_{\left(-\infty,a\right]}\left(\lambda'\right)\mu_{f}\left(\lambda'\right)\\
 & =\left(2\pi h\right)^{-n}\left(\int_{-\infty}^{a}d\lambda f\left(\lambda\right)c_{0}\left(\lambda\right)+h\int_{-\infty}^{a}d\lambda f\left(\lambda\right)c_{1}\left(\lambda\right)+O\left(h^{2}\right)\right)\\
 & \qquad+\int d\lambda''\phi_{0}\left(\frac{\lambda''}{\sqrt{h}}\right)\mu_{f}\left(\lambda''+a\right)\\
 & =\left(2\pi h\right)^{-n}\left(\int_{-\infty}^{a}d\lambda f\left(\lambda\right)c_{0}\left(\lambda\right)+h\int_{-\infty}^{a}d\lambda f\left(\lambda\right)c_{1}\left(\lambda\right)\right)+R\left(h\right),\\
\textrm{with }\quad R\left(h\right) & \leq\left(2\pi h\right)^{-n+1}\left[T^{-1}\left\Vert f\right\Vert _{C^{0}}\left(\int_{\Sigma_{a}}d\nu\right)+O\left(T^{-2}+\nu\left(S_{T,\varepsilon}^{a,e}\right)+h^{1-2\delta}\right)\right],
\end{align*}
for each $\ell>0$ and $T\leq\left(\frac{1}{2}-\delta\right)T_{E}^{\ell,\left[a-\alpha,a+\alpha\right]}\left(h\right)$.
Letting $\alpha\rightarrow0$ and using the upper semi-continuity
of $\Lambda_{\textrm{max}}$, one obtains the above for each $\ell>0$
and $T\leq\left(\frac{1}{2}-\delta\right)T_{E}^{\ell,a}\left(h\right)$.

A similar estimate can be proved for the functional trace of $f_{b}^{+}\left(x\right)=1_{\left[b,\infty\right)}\left(x\right)f\left(x\right)$.
The Weyl law of \prettyref{thm:general Weyl law} now follows on writing
the counting function as a difference of two such functional traces.

\section{\label{sec:Examples-of-recurrence} Examples of recurrence}

In this section we estimates on the volumes of recurrence sets of
various flows. These shall be used in the next \prettyref{sec:Proofs-of-the-corollaries}
to prove the corollaries stated in the introduction.

\subsection{\label{subsec:Anosov-flows} Anosov flows}

First we shall consider the recurrence set of an Anosov vector field
$V$ on compact manifold $Y^{m}$ of dimension $m$. To recall, a
vector field $V$ is said to be Anosov if exist a constant $c_{1}>0$
and a continuous splitting
\begin{align}
TY & =\mathbb{R}\left[V\right]\oplus E^{u}\oplus E^{s}\quad\textrm{ such that}\nonumber \\
\left\Vert \left.e^{tV}\right|_{E^{s}}\right\Vert  & \leq e^{-c_{1}t},\nonumber \\
\left\Vert \left.e^{-tV}\right|_{E^{u}}\right\Vert  & \leq e^{-c_{1}t},\label{eq:Anosov condition}
\end{align}
$\forall t>0,$ that is invariant under the flow of $V$. Here the
norm is taken with respect to some Riemannian metric $g^{TY}$ on
the manifold. For such a vector field its recurrence set $S_{T,\varepsilon}$
\prettyref{eq:recurrence set}, Lyapunov exponents $\lambda_{j}\left(y\right)$
\prettyref{eq:Lyapunoc exponents}, maximal expansion rate $\Lambda_{\textrm{max}}$
\prettyref{eq:lambda max as max antropy} and topological entropy
$\mathtt{h}_{\textrm{top}}$ \prettyref{eq:topological entropy} can
be analogously defined.

Let $\lambda>\Lambda_{\textrm{max}}$ be greater than the maximal
expansion rate. By definition of the maximal expansion rate, and the
semi-group property of the flow, there exists $c>0$ such that
\begin{align}
\left\Vert \left(e^{tV}\right)^{*}f\right\Vert _{C^{2}} & \leq ce^{\lambda t}\left\Vert f\right\Vert _{C^{2}}\quad\textrm{ and }\nonumber \\
d^{g^{TY}}\left(e^{tV}x_{1},e^{tV}x_{2}\right) & \leq ce^{\lambda t}d^{g^{TY}}\left(x_{1},x_{2}\right)\label{eq:growth of distance}
\end{align}
for all $t>0$, $x_{1},x_{2}\in Y$ and $f\in C^{\infty}\left(Y\right)$.
From here exponential bounds on the volume of the recurrence set
\begin{align}
\nu\left(S_{T,\varepsilon}\right) & =O\left(\varepsilon^{m}e^{m\lambda T}\right)\label{eq:recurrence measure Anosov}\\
\nu\left(S_{T,\varepsilon}^{e}\right) & =O\left(\varepsilon^{m}e^{m\lambda T}\right),\label{eq:recurrence measure extended Anosov}
\end{align}
can be proved following an argument given in \cite{Dyatlov-Zworski2016}.
Namely, the recurrence set above has an obvious lift 
\begin{equation}
\tilde{S}_{T,\varepsilon}\coloneqq\left\{ \left(y,t\right)|t\in\left[\frac{1}{2}T_{0},T\right]\textrm{ s.t. }d^{g^{TY}}\left(e^{tV}x,x\right)\leq\varepsilon\right\} \subset Y\times\mathbb{R}\label{eq:lift of the recurrence set}
\end{equation}
satisfying $\pi_{Y}\left(\tilde{S}_{T,\varepsilon}\right)=S_{T,\varepsilon}$
under the projection onto the first $Y$ factor. 

The volume bounds \prettyref{eq:recurrence measure Anosov}, \prettyref{eq:recurrence measure extended Anosov}
then follow from the following proposition.
\begin{prop}
\label{prop:Anosov lifted recurrence bound} For each $\lambda>\Lambda_{\textrm{max}}$
, the lift $\tilde{S}_{T,\varepsilon}$ \prettyref{eq:lift of the recurrence set}
satisfies the volume estimate
\begin{equation}
\nu_{Y\times\mathbb{R}}\left(\tilde{S}_{T,\varepsilon}\right)=O\left(\varepsilon^{m}e^{m\lambda T}\right)\label{eq:measure estimate on lift}
\end{equation}
with respect to the Riemannian product measure on $Y\times\mathbb{R}$.
\end{prop}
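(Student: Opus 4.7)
The plan is to follow the Dyatlov--Zworski strategy: reduce the volume bound on $\nu_{Y\times\mathbb{R}}(\tilde{S}_{T,\varepsilon})$ to a counting bound for closed orbits of the Anosov flow via the closing lemma.

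First, by the Anosov closing lemma, there is a fixed threshold $L>0$ and a uniform constant $C>0$ such that whenever $(y,t)\in\tilde{S}_{T,\varepsilon}$ with $t\geq L$ and $\varepsilon$ sufficiently small, one obtains a closed orbit $\gamma$ of period $\tau_\gamma$ satisfying $|\tau_\gamma-t|\leq C\varepsilon$ and $d(y,\gamma)\leq C\varepsilon$. The short-time portion $t\in[T_0/2,L]$ lies below this threshold and is handled separately: since $T_0$ is the shortest period of the flow, a direct transverse-section estimate bounds this contribution by a lower-order term. Consequently one has the covering
\[
\tilde{S}_{T,\varepsilon}\;\subset\;\bigcup_{\substack{\gamma\text{ closed}\\ \tau_\gamma\leq T+C\varepsilon}}T_{C\varepsilon}(\gamma)\times\bigl[\tau_\gamma-C\varepsilon,\,\tau_\gamma+C\varepsilon\bigr],
\]
where $T_{C\varepsilon}(\gamma)\subset Y$ denotes the $C\varepsilon$-tubular neighbourhood of $\gamma$.

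Next, since $\gamma$ is a $1$-dimensional curve in the $m$-manifold $Y$ of arc length $\ell(\gamma)\leq\|V\|_{\infty}\tau_\gamma$, its tube has Riemannian volume $O(\varepsilon^{m-1}\tau_\gamma)$; combined with the $O(\varepsilon)$ length of the time interval this gives each product box a volume of $O(\varepsilon^{m}\tau_\gamma)$, so that
\[
\nu_{Y\times\mathbb{R}}(\tilde{S}_{T,\varepsilon})\;\leq\; C'\varepsilon^{m}\!\!\!\!\sum_{\tau_\gamma\leq T+C\varepsilon}\tau_\gamma.
\]

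Finally, the Margulis--Parry--Pollicott prime orbit theorem for Anosov flows gives $\sum_{\tau_\gamma\leq T}\tau_\gamma=O(e^{h_{\textrm{top}}T})$, and the Margulis--Ruelle inequality already recorded as \eqref{eq:lambda max vs entropy} yields $h_{\textrm{top}}\leq m\Lambda_{\textrm{max}}$. For any $\lambda>\Lambda_{\textrm{max}}$, polynomial $T$-prefactors are absorbed into the exponential, and one concludes $\nu_{Y\times\mathbb{R}}(\tilde{S}_{T,\varepsilon})=O(\varepsilon^{m}e^{m\lambda T})$.

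The main obstacle is arranging a uniform application of the Anosov closing lemma with constants independent of $T$, and handling cleanly the sub-threshold time range $t\in[T_0/2,L]$; both are standard but technical in a full write-up. Everything else reduces to a packing-and-counting argument on closed orbits.
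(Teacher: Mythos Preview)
Your approach is correct, but it proceeds along a genuinely different route from the paper. The paper argues by direct packing: it first shows, via a Taylor expansion and the uniform invertibility of $(I - de^{tV})$ on $E^{u}\oplus E^{s}$, that any two points $(x,t),(x',t')\in\tilde{S}_{T,\varepsilon}$ which are $\delta e^{-\lambda t}$-close in space and $\delta$-close in time must in fact lie within $C\varepsilon$ of a common short orbit segment. It then covers $Y\times[\tfrac12 T_0,T]$ by $O(Te^{m\lambda T})$ boxes of spatial radius $\sim\delta e^{-\lambda T}$ and time-width $\delta$; in each such box $\tilde{S}_{T,\varepsilon}$ sits inside an $O(\varepsilon^{m})$-volume tube, and summing yields the bound. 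No closing lemma, no periodic-orbit counting, and no Margulis--Ruelle inequality are invoked. Your route is more dynamical in flavour: it imports the Anosov closing lemma to reduce to tubes around genuine closed orbits, then appeals to the exponential growth bound $\sum_{\tau_\gamma\le T}\tau_\gamma=O(e^{h_{\textrm{top}}T})$ together with $h_{\textrm{top}}\le m\Lambda_{\textrm{max}}$ from \eqref{eq:lambda max vs entropy} to close the estimate. This is conceptually appealing and makes the entropy connection explicit, but it leans on substantially heavier machinery (uniform closing constants, Bowen/Margulis orbit growth). The paper's argument is more elementary and self-contained, using only the expansion estimate \eqref{eq:growth of distance} and pointwise hyperbolicity, and in particular it handles the full time range $[\tfrac12 T_0,T]$ uniformly without a separate short-time analysis.
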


\begin{proof}
First we claim that there exist $C,\delta>0$ of the following significance:
for each $\varepsilon>0$ and each pair $\left(x,t\right),\left(x',t'\right)\in\tilde{S}_{T,\varepsilon}$
satisfying $\left|t-t'\right|\leq\delta$, $d^{g^{TY}}\left(x,x'\right)\leq\delta e^{-\lambda t}$
one has 
\begin{equation}
\left|t-t'\right|\leq C\varepsilon,\quad d^{g^{TY}}\left(x,\cup_{t\in\left[-1,1\right]}e^{tV}x'\right)\leq C\varepsilon.\label{eq:orbits are close}
\end{equation}
By choosing $\delta$ sufficiently small and using \prettyref{eq:growth of distance}
we work in a sufficiently small geodesic coordinate chart. The $V$-direction
and $E^{u}\oplus E^{s}$ being transverse, we may replace $x'$ by
$e^{tV}\left(x'\right)$, $t\in\left[-1,1\right]$, to arrange $x-x'\in E^{u}\left(x\right)\oplus E^{s}\left(x\right)$.
Using \prettyref{eq:growth of distance} and a Taylor expansion in
$x,t$ we obtain 
\begin{align*}
\left|e^{tV}\left(x\right)-e^{t'V}\left(x'\right)-de^{tV}\left(x\right)\left(x-x'\right)-V\left(e^{tV}\left(x'\right)\right)\left(t-t'\right)\right| & \leq c_{3}e^{\lambda t}\left|x-x'\right|^{2}+c_{3}\left|t-t'\right|^{2}\\
 & \leq c_{3}\delta\left|x-x'\right|+c_{3}\delta\left|t-t'\right|.
\end{align*}
Since $\left(x,t\right),\left(x',t'\right)\in\tilde{S}_{T,\varepsilon}$
the above gives 
\begin{align*}
c_{3}\delta\left|x-x'\right|+c_{3}\delta\left|t-t'\right|+2\varepsilon & \geq\left|\left(I-de^{tV}\left(x\right)\right)\left(x-x'\right)-V\left(e^{tV}\left(x'\right)\right)\left(t-t'\right)\right|\\
 & \geq c_{4}\left|x-x'\right|+c_{4}\left|t-t'\right|
\end{align*}
with the second line above following from the Anosov property. It
then remains to choose $\delta$ sufficiently small in relation to
$c_{3},c_{4}$ to obtain \prettyref{eq:orbits are close}.

Finally, let $x_{j}$, $j=1,\ldots N$, be a maximal set of points
such that $d^{g^{TY}}\left(x_{i},x_{j}\right)\geq\delta e^{-\lambda T}$.
As the balls $\left\{ B_{\frac{\delta e^{-\lambda T}}{2}}\left(x_{j}\right)\right\} _{j=1}^{N}$
centered at these points are disjoint, the bound $N\leq c_{5}e^{m\lambda T}$
follows by a computation of the total volume. Furthermore the sets
\begin{align*}
B_{j,k} & \coloneqq B_{2\delta e^{-\lambda T}}\left(x_{j}\right)\times\left[\frac{1}{2}T_{0}+k\delta,\frac{1}{2}T_{0}+\left(k+1\right)\delta\right],\\
S_{j,k} & \coloneqq\tilde{S}_{T,\varepsilon}\cap B_{j,k},\qquad j=1,\ldots N,\quad k=0,\ldots,1+\left[\delta^{-1}T\right],
\end{align*}
cover $Y\times\left[\frac{1}{2}T_{0},T\right]$ and $\tilde{S}_{T,\varepsilon}$
respectively. By \prettyref{eq:orbits are close} small $O\left(\varepsilon\right)$
size neighborhoods of the orbits
\[
\left(\underbrace{\frac{1}{2}T_{0}+\left(k+\frac{1}{2}\right)\delta}_{\eqqcolon t_{k}},\cup_{t\in\left[-1,1\right]}e^{\left(t_{k}+t\right)V}\left(x_{j}\right)\right)
\]
 of volume $O\left(\varepsilon^{m}\right)$, then cover $\tilde{S}_{T,\varepsilon}$
proving \prettyref{eq:measure estimate on lift}.
\end{proof}

\subsubsection{$\Lambda_{\textrm{max}}$ vs $\mathtt{h}_{\textrm{top}}$}

In section \prettyref{subsec:Dynamical-invariants} we stated how
the inequality $h_{\textrm{top}}\left(V\right)\leq m.\Lambda_{\textrm{max}}$
\prettyref{eq:lambda max vs entropy} follows from the Margulis-Ruelle
formula for a general flow of a vector field $V$ on a compact manifold
$m$-dimensional manifold $Y$. In this section we show that a reverse
inequality holds between the two invariants when the vector field
is further assumed to be Anosov. Namely, we shall prove the following.
\begin{thm}
\label{thm: lambda max vs htop thm.} For an Anosov vector field $V$
on a compact $m$-dimensional manifold $Y$ one has 
\begin{equation}
\frac{m}{4}.\Lambda_{\textrm{max}}\leq h_{\textrm{top}}\left(V\right).\label{eq: lambda mx leq htop for anosov}
\end{equation}
\end{thm}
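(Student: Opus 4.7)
The plan is to reverse the Margulis-Ruelle inequality \prettyref{eq:Ruelle Margulis inequality} by producing invariant measures whose measure-theoretic entropy gives a lower bound on $h_{\textrm{top}}$ via the variational principle \prettyref{eq:topological entropy}, combined with Pesin's entropy formula on suitably chosen hyperbolic invariant subsets.

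First, I would produce an ergodic $V$-invariant probability measure $\mu_{\epsilon}$ whose top Lyapunov exponent approaches $\Lambda_{\textrm{max}}$. The upper semi-continuity of $\Lambda_{\textrm{max}}$ noted after \prettyref{eq:lambda max as max antropy}, combined with compactness of the space of invariant probability measures on $Y$, supplies such a $\mu_{\epsilon}$ with $\lambda_{1}(\mu_{\epsilon}) \geq \Lambda_{\textrm{max}} - \epsilon$ for each $\epsilon > 0$. The Anosov condition \prettyref{eq:Anosov condition} forces $\mu_{\epsilon}$ to be hyperbolic: its non-flow Lyapunov exponents have absolute value bounded below by $c_{1}$. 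Katok's horseshoe theorem then furnishes, for each $\delta > 0$, a compact uniformly hyperbolic invariant set $\Lambda_{\epsilon,\delta} \subset Y$ supporting invariant measures whose entropy and Lyapunov spectrum approximate those of $\mu_{\epsilon}$ within $\delta$. In particular, the SRB measure $\nu_{\epsilon,\delta}$ on $\Lambda_{\epsilon,\delta}$ satisfies Pesin's equality case of \prettyref{eq:Ruelle Margulis inequality}, giving $h_{\nu_{\epsilon,\delta}}(V) = \chi(\nu_{\epsilon,\delta}) \geq \chi(\mu_{\epsilon}) - O(\delta)$ with $\chi$ as in \prettyref{eq:sum of positive lyapunov}. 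Combining with the variational principle $h_{\textrm{top}}(V) \geq h_{\nu_{\epsilon,\delta}}(V)$ and letting $\epsilon, \delta \to 0$ yields a lower bound on $h_{\textrm{top}}(V)$ in terms of the sum of positive Lyapunov exponents of the limiting measure.

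The main obstacle is extracting the specific factor $m/4$ rather than just $1$. A naive bound $\chi(\mu_{\epsilon}) \geq \lambda_{1}(\mu_{\epsilon}) \geq \Lambda_{\textrm{max}} - \epsilon$ yields only $h_{\textrm{top}}(V) \geq \Lambda_{\textrm{max}}$, which is sharper than \prettyref{eq: lambda mx leq htop for anosov} when $m \leq 4$ but strictly weaker for $m \geq 5$. To reach the dimensional improvement I would also exploit the time-reversed flow $-V$, which is again Anosov (with $E^{u}$ and $E^{s}$ interchanged) and satisfies $h_{\textrm{top}}(-V) = h_{\textrm{top}}(V)$. Averaging the bounds obtained in the two time directions, and invoking the dimensional identity $\dim E^{u} + \dim E^{s} = m-1$ from \prettyref{eq:Anosov condition} together with a pigeonhole-type estimate on the Lyapunov spectrum of the selected measures (forcing a constant fraction of the exponents to exceed a definite multiple of $\Lambda_{\textrm{max}}$), should then yield the asserted factor. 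The delicate point is that the individual ingredients (closing lemma, Katok's horseshoe, Pesin equality) are well-known, but orchestrating them to give the precise constant in \prettyref{eq: lambda mx leq htop for anosov} requires careful bookkeeping of how the Lyapunov data transfer through each step of the approximation.
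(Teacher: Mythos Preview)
Your proposal has a genuine gap at the step invoking ``the SRB measure $\nu_{\epsilon,\delta}$ on $\Lambda_{\epsilon,\delta}$'' and Pesin's equality. A Katok horseshoe is a zero-Lebesgue-measure Cantor set and carries no SRB measure (no invariant measure with absolutely continuous conditionals on unstable leaves), so Pesin's entropy formula $h_\nu = \chi(\nu)$ is simply unavailable there; for any invariant $\nu$ supported on the horseshoe you only have the Ruelle inequality $h_\nu \le \chi(\nu)$, which points the wrong way. Katok's theorem does give $h_{\textrm{top}}(V|_{\Lambda_{\epsilon,\delta}}) > h_{\mu_\epsilon} - \delta$, but that merely sends you back to lower-bounding $h_{\mu_\epsilon}$, which is what you set out to do. Your final paragraph is not a proof either: time reversal flips the sign of the Lyapunov spectrum and preserves $h_{\textrm{top}}$, so averaging the two directions yields nothing new, and the asserted ``pigeonhole-type estimate'' forcing a constant fraction of exponents to be comparable to $\Lambda_{\textrm{max}}$ is false in general---a single unstable direction can carry the top exponent while every other exponent is arbitrarily small in absolute value.

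The paper's argument is entirely different and avoids invariant measures and Pesin theory altogether. It works with the Bowen--Margulis definition of entropy and constructs, via the instability property of Anosov flows and Frink's metrization theorem, a family of distorted distances $d \in \mathcal{D}_{\frac{1}{2}-}(Y)$ (H\"older-$\tfrac{1}{2}$ comparable to the Riemannian distance) whose time-one Lipschitz constants $L_d$ satisfy $\tfrac{m}{2}\ln L_d \le h_{\textrm{top}}$. The factor $\tfrac{m}{4}$ then emerges from two successive halvings: one from the Hausdorff dimension of these distorted metrics lying in $[\tfrac{m}{2}, m]$, and one from the H\"older exponent $\tfrac{1}{2}$ incurred when translating the Lipschitz bound for $d$ back into a growth bound for the Riemannian distance, from which $\Lambda_{\textrm{max}}$ is read off.
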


We shall prove the above at the end of this subsection following some
preparation. Namely, to prove the above we shall use the equivalent
Bowen-Margulis definition of topological entropy. For each $T>0$
one defines the Bowen distance on $Y$ via 
\[
d_{T}^{g^{TY}}\left(y_{1},y_{2}\right)\coloneqq\sup_{t\in\left[0,T\right]}d^{g^{TY}}\left(e^{tR}y_{1},e^{tR}y_{2}\right),
\]
where $d^{g^{TY}}$ denotes the Riemannian distance corresponding
to some Riemannian metric $g^{TY}$ on $Y$. A $\left(T,\epsilon\right)$
separated subset $S\subset Y$ is a finite set in which any two distinct
points are at least distance $\epsilon$ apart with respect to the
above $d_{T}$. Denote by $N\left(T,\epsilon\right)$ the maximum
cardinality of a $\left(T,\epsilon\right)$ separated set in $Y$.
The topological entropy of the flow \prettyref{eq:topological entropy}
is now equivalently defined by 
\begin{equation}
\mathtt{h}_{\textrm{top}}=\mathtt{h}_{\textrm{top}}\left(V\right)\coloneqq\lim_{\epsilon\rightarrow0}\left(\limsup_{T\rightarrow\infty}\frac{\ln N\left(T,\epsilon\right)}{T}\right).\label{eq:topological entropy-1}
\end{equation}

Next for each $s\in\left(0,1\right]$, let $\mathcal{D}_{s}\left(Y\right)$,
$\mathcal{D}_{s-}\left(Y\right)$ respectively be the set of compatible
distorted distance functions $d$ on the manifold satisfying 
\begin{align*}
d^{g^{TY}} & \lesssim d\lesssim\left(d^{g^{TY}}\right)^{s}\\
d^{g^{TY}} & \lesssim d\lesssim\left(d^{g^{TY}}\right)^{s-\epsilon},\quad\textrm{ for some }\epsilon>0,
\end{align*}
respectively. The following inclusions are clear
\begin{align*}
\mathcal{D}_{s}\left(Y\right) & \subset\mathcal{D}_{s'}\left(Y\right),\\
\mathcal{D}_{s-}\left(Y\right) & \subset\mathcal{D}_{s'-}\left(Y\right),\quad s'<s.
\end{align*}
Furthermore, all distances in $\mathcal{D}_{s}\left(Y\right)$, $\mathcal{D}_{s-}\left(Y\right)$
define the same manifold topology while $\mathcal{D}_{1}\left(Y\right)$
is the set of all distances equivalent to the $d^{g^{TY}}$ and hence
includes all Riemannian distances. To each distance $d\in\mathcal{D}_{s}\left(Y\right),\,\mathcal{D}_{s-}\left(Y\right)$
we can associate the Lipschitz constant of its time one flow
\begin{equation}
L_{d}=L_{d}\left(e^{V}\right)\coloneqq\sup_{x_{1}\neq x_{2}}\frac{d\left(e^{V}y_{1},e^{V}y_{2}\right)}{d\left(y_{1},y_{2}\right)}.\label{eq:Lipschitz constant}
\end{equation}
The following notion of the local skewness of the time one map shall
also be useful. It is defined as
\[
SL_{d}\left(e^{V}\right)\coloneqq\sup_{\varepsilon>0}\inf_{0<d\left(x_{1},x_{2}\right)<\varepsilon}\frac{d\left(e^{V}y_{1},e^{V}y_{2}\right)}{d\left(y_{1},y_{2}\right)}.
\]

We now have the following inequalities for topological entropy.
\begin{lem}
\label{lem:Entropy inequality} The topological entropy \prettyref{eq:topological entropy}
of an Anosov vector field satisfies the inequalities
\[
\frac{m}{2}\left(\inf_{d\in\mathcal{D}_{\frac{1}{2}-}\left(Y\right)}\ln L_{d}\right)\leq\mathtt{h}_{\textrm{top}}\leq m\left(\inf_{d\in\mathcal{D}_{\frac{1}{2}-}\left(Y\right)}\ln L_{d}\right)
\]
in relation to the infimum of the log Lipschitz constants \prettyref{eq:Lipschitz constant}
in $\mathcal{D}_{\frac{1}{2}-}\left(Y\right)$.
\end{lem}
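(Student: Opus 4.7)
The plan is to apply the covering-number and separated-set definitions of topological entropy \prettyref{eq:topological entropy-1} relative to the Bowen metric built from the distorted distance $d\in\mathcal{D}_{1/2-}(Y)$, and convert the Lipschitz bound $L_{d}$ on the time-one map $e^{V}$ into entropy bounds by tracking the Hölder distortion that characterizes $\mathcal{D}_{1/2-}$.

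For the upper bound, fix $d\in\mathcal{D}_{1/2-}(Y)$ with $d\leq C(d^{g^{TY}})^{s}$ for some $s\in(0,1/2)$. Iterating the Lipschitz inequality $d(e^{V}y_{1},e^{V}y_{2})\leq L_{d}\,d(y_{1},y_{2})$, combined with the uniform continuity of $e^{tV}$ over $t\in[0,1]$, yields the containment $B_{T}^{d}(y,\epsilon')\supset B^{d}(y,c\epsilon'L_{d}^{-T})$ for every Bowen ball. The Hölder bound on $d$ implies that this $d$-ball contains a Riemannian ball of radius comparable to $(\epsilon'L_{d}^{-T})^{1/s}$, hence has Riemannian volume at least $c'(\epsilon'L_{d}^{-T})^{m/s}$. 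A standard covering-number computation then gives $h_{\mathrm{top}}\leq(m/s)\ln L_{d}$, and infimizing over $d\in\mathcal{D}_{1/2-}(Y)$ together with letting $s$ approach its supremum recovers the asserted upper bound.

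For the lower bound, I use the $(T,\epsilon')$-separated set formulation. Given $d\in\mathcal{D}_{1/2-}(Y)$, construct a maximal $d$-packing of $Y$ by balls of radius $r=\epsilon'L_{d}^{-T}$; the comparison $d\geq c_{1}d^{g^{TY}}$ together with the Riemannian packing estimate bounds its cardinality from below by $cr^{-m}$. To promote this to a genuine Bowen $(T,\epsilon')$-separated set I invoke the Anosov expansiveness \prettyref{eq:Anosov condition} together with the local skewness $SL_{d}$: points that are $d$-separated by $r$ at time $0$ separate to at least $\epsilon'$ in the $d$-metric by some time $t\leq T$, since the flow genuinely expands in an unstable direction at rate close to $L_{d}$. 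Hence $N(T,\epsilon')\geq c(\epsilon'L_{d}^{-T})^{-m}$; taking $\ln/T$, then $\epsilon'\to 0$ and the infimum over $d$, yields the lower bound. The factor $m/2$ rather than $m$ reflects the loss incurred by the Hölder-$(1/2-\epsilon)$ distortion in $\mathcal{D}_{1/2-}(Y)$ when the conversion between $d$ and $d^{g^{TY}}$ is accounted for symmetrically in the two directions.

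The main obstacle will be the lower bound, specifically verifying that a maximal $d$-packing at time zero actually propagates to Bowen-separated orbits under the flow. The upper bound is essentially mechanical once the Lipschitz iteration and the volume comparison are in place, whereas the lower bound requires applying Anosov expansiveness uniformly in $Y$ and matching the resulting expansion rate to $L_{d}$ carefully enough that the Hölder distortion contributes the precise factor $1/2$ in the final constant.
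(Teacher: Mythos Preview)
Your outline and the paper's proof diverge substantially, and your lower-bound strategy has a genuine gap.

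For the upper bound, your covering argument yields $h_{\textrm{top}}\le (m/s)\ln L_{d}$ where $s<\tfrac12$ is the H\"older exponent of the fixed $d$, so the constant you obtain is at least $2m$, not $m$. The phrase ``letting $s$ approach its supremum'' does not repair this: $s$ is determined by $d$, and the infimum of $\ln L_{d}$ over $\mathcal{D}_{1/2-}$ is not approached along metrics with $s\to 1$. The paper instead invokes the known inequality $h_{\textrm{top}}\le \textrm{HD}(d)\ln L_{d}$ from \cite{Fathi89,Roth2020,Walters-book-82} together with a Hausdorff-dimension bound, rather than a direct volume comparison.

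The more serious issue is the lower bound. Your plan is to show, for an \emph{arbitrary} $d$, that a $d$-packing at scale $\epsilon' L_{d}^{-T}$ is Bowen $(T,\epsilon')$-separated. This is false: pairs of points lying along a common stable manifold contract under the forward flow and never separate in forward Bowen distance, no matter how $d$ is chosen, so the step ``points that are $d$-separated by $r$ at time $0$ separate to at least $\epsilon'$ by some time $t\le T$'' cannot be carried out. You flag this as ``the main obstacle,'' but nothing in your sketch addresses it. The paper takes a completely different route: rather than proving a universal packing bound, it \emph{constructs} a specific sequence of metrics $d_{k}\in\mathcal{D}_{1/2-}$ whose log-Lipschitz constants approach $\tfrac{2}{m}h_{\textrm{top}}$. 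The construction starts from the Anosov instability property, builds a quasi-metric $\rho(x,y)=\alpha^{-N(x,y)}$ recording the first time orbits separate beyond a fixed scale, applies Frink's metrization theorem to obtain a genuine metric $D$, and then defines $d_{k}$ as a Bowen-type maximum over the first $k$ iterates of $D$. The comparison $L_{D}(e^{jV})\le 16\,SL_{D}(e^{jV})$ together with the skewness side of the entropy inequality then forces $m\cdot\tfrac{\ln\alpha}{\ln(\alpha\alpha_{\epsilon})}\ln L_{d_{k}}\le h_{\textrm{top}}+O(1/k)$, and letting $k\to\infty$, $\epsilon\to 0$ gives the factor $\tfrac{m}{2}$. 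None of this construction appears in your proposal; it is the essential content of the proof, and the factor $\tfrac12$ arises from the explicit H\"older exponent $\ln\alpha/\ln(\alpha\alpha_{\epsilon})\to\tfrac12$ of the constructed metric, not from a symmetric volume-comparison loss as you suggest.
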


\begin{proof}
Let $\textrm{HD}\left(d\right)$ denote the Hausdorff dimension of
the manifold with respect to the distance $d\in\mathcal{D}_{\frac{1}{2}-}\left(Y\right)$.
The inequalities 
\begin{equation}
\textrm{HD}\left(d\right)\ln SL_{d}\leq\mathtt{h}_{\textrm{top}}\leq\textrm{HD}\left(d\right)\ln L_{d}\label{eq:SL vs htop vs L}
\end{equation}
are fairly well known (see \cite{Fathi89,Roth2020} or \cite[Thm. 7.15]{Walters-book-82}).
Furthermore $\frac{m}{2}\leq\textrm{HD}\left(d\right)\leq m$ follows
from the definition. This proves one half of the lemma 
\begin{equation}
\mathtt{h}_{\textrm{top}}\leq m\left(\inf_{d\in\mathcal{D}_{\frac{1}{2}-}\left(Y\right)}\ln L_{d}\right).\label{eq:comparison of flow invariants}
\end{equation}

One is now left with constructing a sequence of distances $d_{k}\in\mathcal{D}_{\frac{1}{2}-}\left(Y\right)$,
$k=1,2,\ldots$ such that $m\,\ln L_{d_{k}}$ approaches $2\mathtt{h}_{\textrm{top}}$
as $k\rightarrow\infty$. Such a sequence of distances $d_{k}$ can
be constructed for expansive maps \cite{Fathi89,Roth2020}, cf. also
the construction of the Hamenstädt distance \cite{Hamenstadt89}.
The time one map $e^{V}$ is however not expansive in the flow direction.
This lack of expansiveness can nonetheless be replaced with the following
instability property that is satisfied by the flow $e^{V}$ \cite{Norton-Obrien73}:
there is a positive constant $c_{2}>0$ for which one has the following
implication 
\begin{equation}
y\neq e^{tV}x,\,\forall t\in\mathbb{R}\implies d^{g^{TY}}\left(e^{jV}x,e^{jV}y\right)>c_{2}\;\textrm{ for some }j\in\mathbb{Z}.\label{eq:instability of Anosov flows}
\end{equation}
In fact the proof of the above therein gives the following stronger
statement: for any $\epsilon>0$ there exist positive constants $c>0$,
$\alpha>1$ such that for $\alpha_{\epsilon}\coloneqq\alpha+\epsilon$
one has the stronger implication 
\begin{align*}
 & y\neq e^{tV}x,\,\forall t\in\mathbb{R},\;d^{g^{TY}}\left(x,y\right)<c_{2},\qquad\\
\implies & \alpha d^{g^{TY}}\left(x,y\right)\leq\max\left\{ d^{g^{TY}}\left(e^{V}x,e^{V}y\right),d^{g^{TY}}\left(e^{-V}x,e^{-V}y\right)\right\} \leq\alpha_{\epsilon}d^{g^{TY}}\left(x,y\right).
\end{align*}
The constant $\alpha$ can be related to the exponent $c_{1}$ in
the definition \prettyref{eq:Anosov condition} of the Anosov condition.

We now define 
\[
N\left(x,y\right)\coloneqq\begin{cases}
\infty, & x=y,\\
\inf\left\{ N\in\mathbb{N}_{0}|\max_{j\in\left[-N,N\right]}d^{g^{TY}}\left(e^{jV}x,e^{jV}y\right)>c_{2}\alpha^{-\left|j\right|}\right\} , & x\neq y.
\end{cases}
\]
The following bounds are straightforward 
\begin{equation}
\max\left\{ 0,\frac{\ln\frac{c_{2}}{d\left(x,y\right)}}{\ln\alpha\alpha_{\epsilon}}\right\} \leq N\left(x,y\right)\leq\max\left\{ 0,\frac{\ln\frac{c_{2}}{d\left(x,y\right)}}{\ln\alpha}\right\} .\label{eq:bounds on N}
\end{equation}
And we now set
\begin{align}
\rho\left(x,y\right)\coloneqq & \alpha^{-N\left(x,y\right)},\quad\textrm{ satisfying }\label{eq:choice of alpha}\\
\frac{d^{g^{TY}}\left(x,y\right)}{c_{2}}\leq\rho\left(x,y\right) & \leq\left[\frac{d^{g^{TY}}\left(x,y\right)}{c_{2}}\right]^{\ln\alpha/\ln\left(\alpha\alpha_{\epsilon}\right)}\quad\textrm{ for }d^{g^{TY}}\left(x,y\right)\leq c_{2}.\label{eq:d vs rho}
\end{align}
It hence follows that $\rho$ defines the same manifold topology as
$d^{g^{TY}}$, but it does not quite define a distance. The inequalities
\prettyref{eq:bounds on N} further give $d^{g^{TY}}\left(x,y\right)\geq\frac{c_{2}}{2}\implies N\left(x,y\right)\leq\frac{\ln2}{\ln\alpha}\implies\alpha^{N}\leq\alpha^{\frac{\ln2}{\ln\alpha}}=2$.
And applying the triangle inequality for $d^{g^{TY}}$ one obtains
\begin{align*}
\min\left\{ N\left(x,y\right),N\left(y,z\right)\right\}  & \leq M+N\left(x,z\right)\quad\textrm{ and }\\
\quad\rho\left(x,z\right) & \leq2\max\left\{ \rho\left(x,y\right),\rho\left(y,z\right)\right\} \quad\forall x,y,z\in Y
\end{align*}
as a weaker version of the triangle inequality for $\rho$. One now
applies Frink's metrization theorem to obtain the existence of a metric
$D$ on $Y$ satisfying 
\begin{equation}
D\left(x,y\right)\leq\rho\left(x,y\right)\leq4D\left(x,y\right).\label{eq:D vs rho comparison}
\end{equation}
Thus $D$ defines the same topology as $d^{g^{TY}}$. And furthermore
we have $D\in\mathcal{D}_{\frac{1}{2}-}\left(Y\right)$ on account
of \prettyref{eq:d vs rho} and \prettyref{eq:D vs rho comparison}.

It is now an exercise to show that $\rho\left(e^{jV}x,e^{jV}y\right)\leq\alpha^{j}\rho\left(x,y\right)$
with equality on some neighborhood $V_{j}\subset Y\times Y$ of the
diagonal in the product. From \prettyref{eq:D vs rho comparison}
this gives 
\begin{align}
D\left(e^{jV}x,e^{jV}y\right) & \leq4\alpha^{j}D\left(x,y\right),\quad\forall x,y\in Y,\nonumber \\
D\left(e^{jV}x,e^{jV}y\right) & \geq\frac{1}{4}\alpha^{j}D\left(x,y\right),\quad\forall\left(x,y\right)\in V_{j}.\label{eq:Lip =000026 SL est for D}
\end{align}
And thus 
\begin{equation}
L_{D}\left(e^{jV}\right)\leq4\alpha^{j}\leq16SL_{D}\left(e^{jV}\right).\label{eq: L vs SL for D}
\end{equation}
Finally we define the following sequence of distances
\begin{equation}
d_{k}\left(x,y\right)\coloneqq\max_{0\leq j\leq k-1}\frac{D\left(e^{jV}x,e^{jV}y\right)}{L_{D}^{j/n}}\label{eq: approximating metrics}
\end{equation}
$k\in\mathbb{N}$, which are all equivalent to $D$. Their Lipschitz
constants $L_{d_{k}}\left(e^{V}\right)=\left[L_{D}\left(e^{kV}\right)\right]^{1/k}$
are seen to be given in terms of the $D$-Lipschitz constants of the
time $k$ map. Using \prettyref{eq:SL vs htop vs L}, \prettyref{eq: L vs SL for D}
and $\mathtt{h}_{\textrm{top}}\left(e^{kV}\right)=k\mathtt{h}_{\textrm{top}}\left(e^{V}\right)$
one now obtains
\begin{align*}
m\left(\frac{\ln\alpha}{\ln\left(\alpha\alpha_{\epsilon}\right)}\right)\ln L_{d_{k}}\left(e^{V}\right) & \leq\textrm{HD}\left(d_{k}\right)\ln L_{d_{k}}\left(e^{V}\right)\\
 & \leq\frac{\textrm{HD}\left(d_{k}\right)}{k}\ln L_{D}\left(e^{kV}\right)\\
 & \leq\frac{\textrm{HD}\left(d_{k}\right)}{k}\left[\ln16+\ln SL_{D}\left(e^{kV}\right)\right]\\
 & \leq\frac{\textrm{HD}\left(d_{k}\right)}{k}\ln16+\frac{1}{k}\mathtt{h}_{\textrm{top}}\left(e^{kV}\right)\\
 & =\frac{\textrm{HD}\left(d_{k}\right)}{k}\ln16+\mathtt{h}_{\textrm{top}}\left(e^{V}\right)\\
 & \leq\frac{m}{k}\ln16+\mathtt{h}_{\textrm{top}}\left(e^{V}\right).
\end{align*}
Letting $k\rightarrow\infty$, and noting that $\left(\frac{\ln\alpha}{\ln\left(\alpha\alpha_{\epsilon}\right)}\right)\rightarrow\frac{1}{2}$
as $\epsilon\rightarrow0$, one obtains the theorem.
\end{proof}
The above lemma now implies the main result of this subsection \prettyref{thm: lambda max vs htop thm.}.
\begin{proof}[Proof of \prettyref{thm: lambda max vs htop thm.}]
 As the previous \prettyref{lem:Entropy inequality} shows, for each
$\lambda>\frac{2}{m}\mathtt{h}_{\textrm{top}}$ we have 
\[
L_{d}\leq e^{\lambda},\quad\textrm{ for some }d\in\mathcal{D}_{\frac{1}{2}-}\left(Y\right).
\]
Using the semi-group property of the flow one obtains a positive constant
$c>0$ such that 
\[
d\left(e^{tV}x_{1},e^{tV}x_{2}\right)\leq ce^{\lambda t}d\left(x_{1},x_{2}\right).
\]
From $d^{g^{TY}}\lesssim d\lesssim\left(d^{g^{TY}}\right)^{\frac{1}{2}-\epsilon}$
this further gives
\begin{align}
d^{g^{TY}}\left(e^{tV}x_{1},e^{tV}x_{2}\right) & \leq ce^{2\lambda t}d^{g^{TY}}\left(x_{1},x_{2}\right)\nonumber \\
\left\Vert \left(e^{tV}\right)^{*}f\right\Vert _{C^{2}} & \leq ce^{2\lambda t}\left\Vert f\right\Vert _{C^{2}}\label{eq:growth of distance-1}
\end{align}
$\forall x_{1},x_{2}\in Y$, $f\in C^{2}\left(Y\right)$. The inequality
$\Lambda_{\textrm{max}}\leq2\lambda$ now follows easily from the
last equation and the definition of the maximal expansion rate.
\end{proof}

\subsection{\label{subsec:Compact-Lie-Groups} Compact Lie Groups}

Next we consider geodesic flows associated to bi-invariant metrics
on compact Lie groups. In this case the volume bound on the recurrence
set is the one given  below. It was essentially proved by Volovoy
in \cite[Prop. 4]{Volovoy-1990-I} and we refine the bound while following
his outline.
\begin{thm}
\label{thm:recurrence set bounds Lie group }Let $G$ be a compact
Lie group equipped with a bi-invariant metric $g$. The recurrence
sets for its geodesic flow satisfies the volume bounds
\begin{align}
\nu\left(S_{T,\varepsilon}^{1}\right) & =O\left(\varepsilon^{p-1}T^{p}\right)\label{eq:recurrence set bound Lie group}\\
\nu\left(S_{T,\varepsilon}^{1,e}\right) & =O\left(\varepsilon^{p-1}T^{p}\right),\label{eq:ext recurrence bound Lie group}
\end{align}
where $p=\textrm{rk}G$ is the rank of the Lie group.
\end{thm}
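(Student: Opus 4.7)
The plan is to exploit the explicit form of the geodesic flow on a Lie group with bi-invariant metric, reducing the recurrence estimate to a lattice-point count on the maximal torus, in the spirit of Volovoy \cite{Volovoy-1990-I}.

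First, in the left trivialization $T^{*}G\cong G\times\mathfrak{g}^{*}$, the Hamiltonian flow of $|\xi|^{2}/2$ takes the explicit form $\phi_{t}(g,\xi)=(g\exp(tX_{\xi}),\xi)$, where $X_{\xi}\in\mathfrak{g}$ is metric-dual to $\xi$. Left-invariance of both the metric and the phase-space distance shows that whether $(g,\xi)\in S^{*}G$ is $\varepsilon$-recurrent depends only on $X_{\xi}\in S(\mathfrak{g})$ and reduces to the existence of $t\in[T_{0}^{1}/2,T]$ with $d_{G}(\exp(tX_{\xi}),e)\le C\varepsilon$. Consequently it suffices to estimate the $(n-1)$-dimensional measure of
$$\mathcal{R}_{T,\varepsilon}\coloneqq\{X\in S(\mathfrak{g})\,:\,\exists\,t\in[T_{0}^{1}/2,T],\ d_{G}(\exp(tX),e)\le C\varepsilon\},$$
together with its $\varepsilon$-enlargement, and multiply by $\mathrm{vol}(G)$.

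Next, using Ad-invariance of the bi-invariant metric, every $X\in S(\mathfrak{g})$ can be written $X=\mathrm{Ad}(g)H$ with $H\in S(\mathfrak{t})$ for a fixed maximal torus $T\subset G$ of Lie algebra $\mathfrak{t}$, and $d_{G}(\exp(tX),e)=d_{G}(\exp(tH),e)$. The Weyl integration formula provides a pushforward decomposition $S(\mathfrak{g})\sim(G/T)\times_{W}S(\mathfrak{t})$ whose density is the bounded discriminant $|\prod_{\alpha}\alpha(H)|^{2}$, so the measure of $\mathcal{R}_{T,\varepsilon}$ is controlled by the $(p-1)$-dimensional measure of its image in $S(\mathfrak{t})$.

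Under the exponential $\exp:\mathfrak{t}\to T$, with kernel a lattice $\Lambda\cong\mathbb{Z}^{p}$, the comparison $d_{G}(\exp H,e)\asymp\mathrm{dist}_{\mathfrak{t}}(H,\Lambda)$ holds for $H$ near $0$. For each $\lambda\in\Lambda\setminus\{0\}$, the set of $H\in S(\mathfrak{t})$ for which $|tH-\lambda|\le C\varepsilon$ for some $t\in[T_{0}^{1}/2,T]$ is nonempty only if $|\lambda|\le C'T$, in which case it lies in a spherical cap around $\lambda/|\lambda|$ of radius $O(\varepsilon/|\lambda|)$. After the $\varepsilon$-enlargement required to bound $S_{T,\varepsilon}^{1,e}$, each such cap has $(p-1)$-volume $O(\varepsilon^{p-1})$. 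Since the number of lattice points with $|\lambda|\le C'T$ is $O(T^{p})$, summation yields the bound $O(T^{p}\varepsilon^{p-1})$ for $S_{T,\varepsilon}^{1,e}$, and a fortiori for the smaller set $S_{T,\varepsilon}^{1}$.

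The main obstacle is the uniformity in the second step: the map $G/T\times S(\mathfrak{t})\to S(\mathfrak{g})$ degenerates on the singular stratum where some root vanishes. Although the Weyl discriminant suppresses this locus in the pushforward measure, one must also check that the comparison $d_{G}(\exp(tX),e)\asymp d_{\mathfrak{t}}(tH,\Lambda)$ is uniform as $X$ approaches the singular set and as $t$ ranges over $[T_{0}^{1}/2,T]$. The subsequent lattice-point counting is a standard geometric calculation.
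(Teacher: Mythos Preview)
Your approach matches the paper's: reduce by left-invariance to the recurrence set based at the identity, conjugate into the maximal torus, and bound via a lattice-point count yielding $O(T^{p})$ caps each of $(p-1)$-volume $O(\varepsilon^{p-1})$; the paper cites Volovoy \cite{Volovoy-1990-I} for both the torus covering by $O(T^{p})$ balls and the $O(\varepsilon^{p-1})$ volume of the $\mathrm{Ad}$-saturation of each ball, whereas you make the lattice count explicit and replace the conjugate-volume estimate by the Weyl integration formula with bounded discriminant. Your stated obstacle is not genuine: since $d_{G}(\exp(tX),e)=d_{G}(\exp(tH),e)$ by bi-invariance of the metric, the distance comparison lives entirely on the flat torus $T$ and is uniform, while the singular stratum enters only through the discriminant in the measure comparison, which is bounded above.
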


\begin{proof}
With $\mathfrak{g}$ being its Lie algebra, let $\exp:\mathfrak{g}\rightarrow G$
denote the exponential mapping of the Lie group. For bi-invariant
metrics, the path $A\exp tb$, $b\in\mathfrak{g}=T_{I}G$, gives the
geodesic through the point $A\in G$ in the direction $dL_{A}\left(b\right)\in T_{A}G$
(here $L_{A}$ denotes left multiplication by $A$).

From the compactness of the Lie group, one finds a uniform Lipschitz
constant $C>0$ such that 
\begin{equation}
d^{g}\left(A\exp tb,A\right)\leq Cd^{g}\left(\exp tb,I\right),\;\;\forall A\in G,\,b\in\mathfrak{g},\,t>0.\label{eq:Lipschitz constant on lie group}
\end{equation}
 This gives a constant $C_{1}$ such that 
\begin{align}
\nu\left(S_{T,\varepsilon}^{1}\right) & \leq C_{1}\nu_{I}\underbrace{\left\{ a\in S_{I}^{*}G|\exists t\in\left[\frac{1}{2}T_{0},T\right]\textrm{ s.t. }d^{g}\left(\exp ta,I\right)\leq\varepsilon\right\} }_{=S_{T,\varepsilon,I}^{1}}\label{eq:based recurrence Lie group}\\
\nu\left(S_{T,\varepsilon}^{1,e}\right) & \leq C_{1}\nu_{I}\underbrace{\left\{ a\in S_{I}^{*}G|d^{g}\left(a,S_{T,\varepsilon,I}^{1}\right)\leq\varepsilon\right\} }_{=S_{T,\varepsilon,I}^{1,e}}\label{eq:based extended recurrence Lie group}
\end{align}
for each $\varepsilon>0$ and $T>\frac{1}{2}T_{0}$. Here $\nu_{I}$
denote the induced measure on the unit sphere inside the dual Lie
algebra $\mathfrak{g}^{*}=S_{I}^{*}G.$ It thus suffices to estimate
the measure of the recurrence set based at the identity $S_{T,\varepsilon,I}^{1}$
on the right hand side above \prettyref{eq:based recurrence Lie group}.

Now let $H=\mathbb{T}^{p}\subset G$, $p=\textrm{rk}G$, be a maximal
torus. Similar recurrence sets $S_{T,\varepsilon,I}^{1,H}$, $S_{T,\varepsilon,I}^{1,e,H}$
as \prettyref{eq:based recurrence Lie group}, \prettyref{eq:based extended recurrence Lie group}
can be defined that is based at the identity $I\in H$ in the maximal
torus. In \cite[Prop. 3]{Volovoy-1990-I} the measure bound $\nu\left(S_{T,\varepsilon,I}^{1,H}\right)=O\left(\varepsilon^{p-1}T^{p}\right)$
for the based recurrence set inside an arbitrary torus was proved.
In fact, \cite[Cor. 3]{Volovoy-1990-I} showed that $S_{T,\varepsilon,I}^{1,H}\subset S_{I}^{*}H$,
and thus $S_{T,\varepsilon,I}^{1,e,H}$ too, could be covered with
a collection of radius $\varepsilon$- balls $\left\{ B_{\varepsilon}\left(h_{j}\right)|h_{j}\in S_{I}^{*}H\right\} _{j=1}^{M}$,
where $M=O\left(T^{p}\right)$. For a general group, any element $a\in S_{T,\varepsilon,I}^{1}\subset S_{I}^{*}G$
is conjugate to an element in the torus $\exp\left(t\textrm{ad}_{P}a\right)=P\left(\exp ta\right)P^{-1}\in H$
for some $P\in G$. It follows from \prettyref{eq:Lipschitz constant on lie group}
that the conjugates $\textrm{ad}_{P}a\in S_{T,C\varepsilon,I}^{1,H}$,
$\textrm{ad}_{P}b\in S_{T,C\varepsilon,I}^{1,e,H}$ are elements of
the based recurrence sets of the torus, for $a,b\in S_{T,\varepsilon,I}^{1},S_{T,\varepsilon,I}^{1,e}$
respectively, and hence in one of the $M=O\left(T^{p}\right)$ balls
$B_{\varepsilon}\left(h_{j}\right)$ of radius $\varepsilon$. It
thus suffices to prove the estimate $\nu\left(S_{j}\right)=O\left(\varepsilon^{p-1}\right)$
on the volumes of the conjugates $S_{j}\coloneqq G.B_{\varepsilon}\left(h_{j}\right).G^{-1}$,
$j=1,2,\ldots,M$ of these balls. This was also done by Volovoy in
\cite[pgs. 134-135]{Volovoy-1990-I}.
\end{proof}

\subsection{\label{subsec:Surface-of-revolution} Surface of revolution}

We now consider geodesic flows on compact surfaces of revolution. 

Namely, the manifold is now given as 
\begin{align}
X & =\left\{ \left(\rho\left(z\right)\cos\phi,\rho\left(z\right)\sin\phi,z\right)|\phi\in\left[0,2\pi\right],z\in\left[a_{-},a_{+}\right]\right\} \nonumber \\
 & \qquad\qquad\subset\mathbb{R}^{3}.\label{eq:surface of revolution}
\end{align}
Here $\rho:\left(a_{-},a_{+}\right)\rightarrow\left(0,\infty\right)$
is a smooth function satisfying $\lim_{z\rightarrow a_{\pm}}\rho\left(z\right)=0$,
$\lim_{z\rightarrow a_{\pm}}\rho'\left(z\right)=\mp\infty$. The surface
\prettyref{eq:surface of revolution} is thus obtained by rotating
the curve $\left(\rho\left(z\right),0,z\right)$, $a\leq z\leq b$,
around the $z$-axis (see \prettyref{fig:Surface-of-revolution}).
We shall further assume that the surface is strictly convex. That
is, the function $-\rho$ is strictly convex satisfying $-\rho''\left(z\right)>0$.
Thus $\rho$ is maximized at a unique $z_{0}\in\left[a_{-},a_{+}\right]$.
The curve $\gamma_{E}\coloneqq\left\{ \left(x,y,z\right)\in X|z=z_{0}\right\} $
shall be referred to as the \textit{equator}. The points $\left(0,0,a_{\pm}\right)$
are referred to as the north and south poles respectively.

\begin{figure}
\includegraphics[scale=0.6]{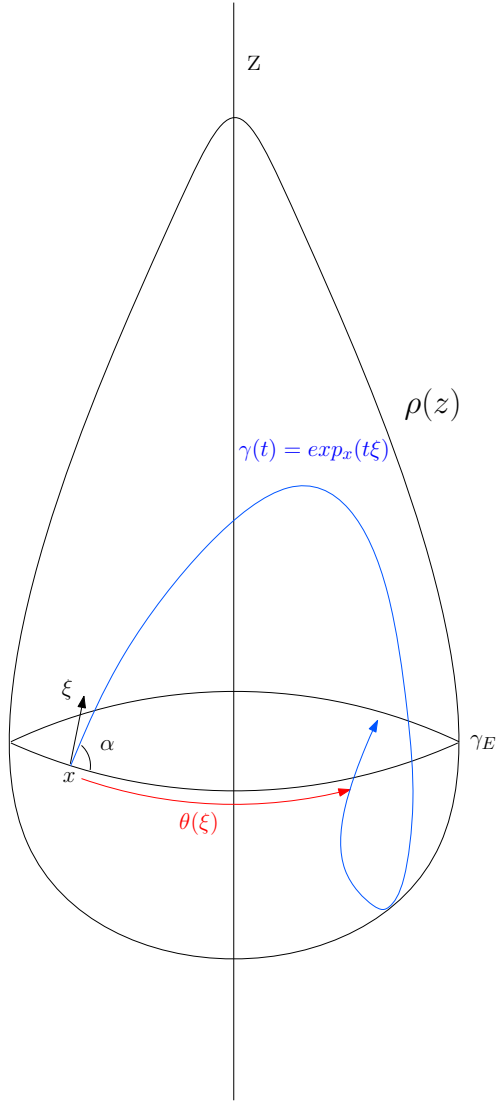}

\caption{\label{fig:Surface-of-revolution} Surface of revolution}
\end{figure}

The metric on $X$ is chosen to be the one induced from the Euclidean
embedding. The geodesic flow on surfaces of revolution is well understood.
Firstly, the Hamiltonian function can be computed
\[
H\left(\xi_{z},\xi_{\phi}\right)=\frac{1}{2}\left[\frac{\xi_{\phi}^{2}}{\rho^{2}}+\frac{\xi_{z}^{2}}{1+\rho_{z}^{2}}\right]
\]
in terms of the cylindrical coordinates $\left(z,\phi\right)$. The
angular momentum function $\xi_{\phi}$ Poisson commutes with the
above Hamiltonian and is hence preserved under the geodesic flow.
This conservation law can be rewritten in a more explicit form on
the base. The geodesic $\gamma=\left(z\left(t\right),\phi\left(t\right)\right)$
is the projection of the Hamilton trajectory 
\[
\left(z\left(t\right),\phi\left(t\right);\underbrace{\xi_{z}\left(t\right)}_{=\left(1+\rho_{z}^{2}\right)\dot{z}},\underbrace{\xi_{\phi}\left(t\right)}_{=\rho^{2}\dot{\phi}}\right),
\]
and thus the function $\rho^{2}\dot{\phi}=\xi_{\phi}\left(t\right)=c$
is constant along the flow. Denote by $\alpha$ the angle between
the velocity vector $\dot{\gamma}\left(t\right)$ at a point $\gamma\left(t\right)$
of the geodesic with the parallel through the point. It is easy to
compute $\rho\dot{\phi}=\left|\dot{\gamma}\right|\cos\alpha$. The
Hamiltonian/length $\frac{1}{2}\left|\dot{\gamma}\right|^{2}=H\left(\gamma\left(t\right)\right)$
is preserved along the flow. Thus the conservation law of $\xi_{\varphi}$
can thus be restated by saying that 
\begin{equation}
\rho\cos\alpha=c\;(\textrm{constant })\label{eq:Clairaut relation}
\end{equation}
along the flow. It is easy to integrate the above equation given the
initial point and velocity vector. Conversely, and from the uniqueness
existence of of geodesics, any unit speed (non-parallel) curve along
which the above relation \prettyref{eq:Clairaut relation} holds is
a geodesic. While the only parallel geodesic is the equator $\gamma_{E}$.
The last three lines constitute the statement of \textit{Clairaut's
theorem}. 
\begin{thm}
\label{thm:recurrence set bound surface revol.} Consider $\left(X^{2},g\right)$
a compact, strictly convex surface of revolution. Then volumes of
the recurrence sets of the geodesic flow \prettyref{eq:recurrence set}
satisfy the estimates
\begin{align}
\nu\left(S_{T,\varepsilon}^{1}\right) & =O\left(\varepsilon^{\frac{1}{r}}T^{1-\frac{1}{r}}\right)\label{eq:recurrence set surf. rev.}\\
\nu\left(S_{T,\varepsilon}^{1,e}\right) & =O\left(\varepsilon^{\frac{1}{r}}T^{1-\frac{1}{r}}\right),\label{eq:ex recurrence set surf rev.}
\end{align}
for $\varepsilon,T$ sufficiently small. Here $r$ \prettyref{eq:order of vanishing of E. return map}
is the maximum order of vanishing of the equatorial return map.
\end{thm}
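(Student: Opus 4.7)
The plan is to exploit the complete integrability of the geodesic flow afforded by Clairaut's relation. Since the Clairaut constant $c=\xi_{\phi}$ is preserved by the flow, the energy level $S^{*}X$ foliates---off a measure-zero singular locus consisting of the equator $\gamma_{E}$ together with the meridians through the poles---into invariant Liouville 2-tori. On each regular torus I would introduce action-angle coordinates $(c,\tau,\psi)\in\mathbb{R}\times\mathbb{T}^{2}$ in which the geodesic flow is linear: $(\tau,\psi)\mapsto(\tau+t,\,\psi+t\,\Delta\phi(c)/T(c))$. Here $T(c)>0$ denotes the radial period and $\Delta\phi(c)$ the angular shift per radial period, both smooth functions of $c$ read off from Clairaut's equation by quadrature. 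In these coordinates the Liouville measure takes the form $d\nu=J(c)\,dc\,d\tau\,d\psi$ for a bounded positive density $J$.

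With this picture in hand, the recurrence condition $d\bigl(e^{tH_{p_{0}}}p,p\bigr)\le\varepsilon$ for some $t\in[\tfrac{1}{2}T_{0}^{1},T]$ decouples. Since $c$ is constant along the flow, its recurrence is automatic; the $\tau$-component forces $t=nT(c)+O(\varepsilon)$ for some $n\in\mathbb{Z}_{>0}$, and the $\psi$-component then forces $n\Delta\phi(c)=2\pi m+O(\varepsilon)$ for some $m\in\mathbb{Z}$. Setting $\omega(c)\coloneqq\Delta\phi(c)/(2\pi)$, the Clairaut constants of points in $S_{T,\varepsilon}^{1}$ must lie in
\[
B_{T}\coloneqq\bigl\{c\,:\ \exists\,n\in[1,\,T/T_{0}^{1}],\ m\in\mathbb{Z}\ \text{with}\ |n\omega(c)-m|\le C\varepsilon\bigr\}.
\]
A crucial step is to identify the one-dimensional function $\omega(c)$ with the equatorial return map $\theta$ under the local change of variables $\xi\leftrightarrow c$, so that the hypothesis on the order of vanishing of $\theta-\theta(\xi_{0})$ translates into the Taylor lower bound $|\omega(c)-\omega(c_{*})|\gtrsim|c-c_{*}|^{r_{c_{*}}}$ with $r_{c_{*}}\le r$, in a neighbourhood of each critical point $c_{*}$ of $\omega$.

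I would then estimate $\operatorname{meas}(B_{T})$ by a Farey-type decomposition. For fixed $n$, the preimage $\omega^{-1}([m/n-\varepsilon/n,\,m/n+\varepsilon/n])$ has measure $O(\varepsilon/n)$ on the non-critical part of $c$-space (where $\omega'$ is bounded away from zero) and measure $O((\varepsilon/n)^{1/r})$ in a neighbourhood of any critical point the interval intersects. Summing the first contribution over the $O(n)$ admissible values of $m$ gives $O(\varepsilon)$ per $n$, while the second gives $O((\varepsilon/n)^{1/r})$ per $n$ (only finitely many intervals can reach a given critical value of $\omega$). Summing over $n\in[1,T/T_{0}^{1}]$ then produces
\[
\operatorname{meas}(B_{T})=O\bigl(\varepsilon T+\varepsilon^{1/r}T^{1-1/r}\bigr)=O\bigl(\varepsilon^{1/r}T^{1-1/r}\bigr),
\]
the second equality using the hypothesis ``$\varepsilon,T$ sufficiently small'' in the quantitative form $\varepsilon^{r-1}T\lesssim 1$. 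Fibring over $B_{T}$ against the bounded density $J$ then yields the claimed bound for $\nu(S_{T,\varepsilon}^{1})$, and the extended set $\nu(S_{T,\varepsilon}^{1,e})$ follows by running the same argument with $\varepsilon$ replaced by $2\varepsilon$.

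The main obstacles I expect are twofold. First, one must handle the degenerate loci where the action-angle coordinates break down: a neighbourhood of the equator (where the invariant tori pinch and $T(c)$ can be singular) and of the meridians through the poles (where $\phi$ degenerates) must be shown to contribute a measure already absorbed by the main bound, which should follow from explicit Birkhoff-type normal forms near $c=\rho_{\max}$ and from uniform estimates on a $\delta$-neighbourhood of the meridians. Second, the identification of the order of vanishing of $\theta-\theta(\xi_{0})$ with that of $\omega(c)-\omega(c_{*})$ must be carried out through the auxiliary map $\xi\mapsto c(\xi)=\rho_{\max}\cos\alpha(\xi)$, which has its own critical point at $\alpha=0$; one must track the extra Jacobian factor in the composition to verify that the exponent $r$ transfers correctly from $\theta$ to $\omega$.
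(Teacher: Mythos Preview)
Your approach is essentially the paper's, recast in action-angle language. Both reduce, via Clairaut, to the same one-dimensional Diophantine condition on the rotation number of the return map and then invoke the order-$r$ lower bound for the measure estimate. The paper's framing is, however, more economical: it uses $\gamma_E$ as a global Poincar\'e section, observing that every geodesic meets the equator in uniformly bounded time, and so passes directly to the based set $S^{1}_{T,\varepsilon,x}\subset S^{*}_{x}X$ for $x\in\gamma_E$; the relation $\gamma(t+\tau(\xi))=R_{\theta(\xi)}\gamma(t)$ (from Clairaut plus rotational symmetry) then gives the condition $\bigl|\theta(\xi)/2\pi - q/p\bigr|\le C\varepsilon/p$ for some $|p|\le CT$ at once. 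This bypasses both obstacles you flag --- no global action-angle chart is needed (hence no degeneration near the equator or the poles to control separately), and since the paper works with $\theta(\xi)$ directly rather than with $\omega(c)$, the change of variables $\xi\mapsto c=\rho_{\max}\cos\alpha(\xi)$ and its critical Jacobian never enter. For the measure estimate the paper does not split into critical and non-critical regions: from the uniform bound $|\theta(\xi)-\theta(\xi_0)|\gtrsim|\xi-\xi_0|^{r}$ it writes $O\bigl(\sum_{p\le CT}(\varepsilon/p)^{1/r}\bigr)=O(\varepsilon^{1/r}T^{1-1/r})$ in a single line. In particular your additional $O(\varepsilon T)$ contribution from the non-critical region, and the accompanying quantitative reading of ``$\varepsilon,T$ sufficiently small'' as $\varepsilon^{r-1}T\lesssim1$, do not appear in the paper's argument.
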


\begin{proof}
It follows easily from the Clairaut relation \prettyref{eq:Clairaut relation},
that every geodesic necessarily intersects the equator $\gamma_{E}$
in a uniformly finite time. By compactness, it thus suffices to estimate
the measure of the based recurrence sets
\begin{align*}
S_{T,\varepsilon,x}^{1} & \coloneqq\left\{ \xi\in S_{x}^{*}X|\exists t\in\left[\frac{1}{2}T_{0},T\right]\textrm{ s.t. }d\left(\exp_{x}\left(t\xi\right),x\right)\leq\varepsilon,\;\exp_{x}\left(t\xi\right)\in\gamma_{E}\right\} \\
S_{T,\varepsilon,x}^{1,e} & \coloneqq\left\{ \xi\in S_{x}^{*}X|d\left(\xi,S_{T,\varepsilon,x}^{1}\right)\leq\varepsilon\right\} 
\end{align*}
for $x\in\gamma_{E}$.

Next for each $x\in\gamma_{E}$ on the equator, we define 
\begin{align}
\tau & :S_{x}^{*}X\rightarrow\mathbb{R}_{>0}\label{eq:first time of return}\\
\theta & :S_{x}^{*}X\rightarrow S^{1}=\left[0,2\pi\right]\label{eq:rotation angle}
\end{align}
as functions defined for elements in the cosphere $\xi\in S_{x}^{*}X$
above $x$. The first maps $\xi$ to the time of first return $\tau\left(\xi\right)$
of the geodesic $\gamma\left(t\right)=\exp_{x}\left(t\xi\right)$
to the equator. While the second maps $\xi$ to the angle of rotation
$\theta\left(\xi\right)$ of the equator needed to take $x$ to the
point of first return $\exp_{x}\left(\tau\left(\xi\right)\xi\right)$
(see \prettyref{fig:Surface-of-revolution}). There are standard formulas
for $\theta$ in terms of the defining function $\rho$ for the surface
(cf. \cite[Ch. 4B]{Besse78-book}). Now by Clairaut's equation \prettyref{eq:Clairaut relation}
the angle $\alpha$ between the geodesic and the equator will be the
same at $\exp_{x}\left(\tau\left(\xi\right)\xi\right)$. Thus by rotational
symmetry, we have the relations 
\[
\gamma\left(t+\tau\left(\xi\right)\right)=R_{\theta\left(\xi\right)}\gamma\left(t\right)
\]
 and the subsequent times of return will be $2\tau\left(\xi\right),3\tau\left(\xi\right),\ldots$
respectively. From the continuity of $\tau$ one can thus find a positive
$C>0$ such that for each $t>0$ there exists $\left|s\right|\leq C$
and $p\in\mathbb{Z}$, $\left|p\right|\leq Ct$, satisfying $\exp_{x}\left(t\xi\right)=R_{p\theta\left(\xi\right)}\gamma\left(s\right).$ 

One is thus reduced to estimating the measure of the set 
\begin{equation}
\tilde{S}_{T,\varepsilon,x}=\left\{ \xi\in S_{x}^{*}X|\exists p\in\mathbb{Z}\setminus\left\{ 0\right\} ,q\in\mathbb{Z},\:\left|p\right|,\left|q\right|\leq CT\textrm{ s.t. }\left|2\pi\theta\left(\xi\right)-\frac{q}{p}\right|\leq\frac{\varepsilon}{p}\right\} .\label{eq:return map recurrence set}
\end{equation}
From the definition of $r$ \prettyref{eq:order of vanishing of E. return map}
as the maximum vanishing order, one has $\left|\theta\left(\xi\right)-\theta\left(\xi_{0}\right)\right|\geq C\left|\xi-\xi_{0}\right|^{r}$
for each pair $\xi,\xi_{0}\in S_{x}^{*}X$ sufficiently close. The
volume of the set above is now easily estimated as 
\[
\nu_{x}\left(\tilde{S}_{T,\varepsilon,x}\right)=O\left(\sum_{p=1}^{CT}\left(\frac{\varepsilon}{p}\right)^{\frac{1}{r}}\right)=O\left(\varepsilon^{\frac{1}{r}}T^{1-\frac{1}{r}}\right)
\]
as required. 
\end{proof}

\section{\label{sec:Proofs-of-the-corollaries} Proofs of the Corollaries}

In this section we prove the three corollaries of our main \prettyref{thm:general Weyl law}.
They shall be based on the volume bounds on the recurrence sets from
\prettyref{sec:Examples-of-recurrence}.
\begin{proof}[Proofs of the Corollaries \prettyref{cor:Quant. Ber.}, \prettyref{cor:Lie group corollary},
\prettyref{cor: Surface of revolution corollary}]
 The pseudodifferential operator in all corollaries is the semiclassical
Laplacian $P_{h}=h^{2}\Delta_{g}$ and the interval to be $\left[a,1\right]$
with $a<0$. The principal symbol of the Laplacian is $p_{0}=\left|\xi\right|^{2}\in C^{\infty}\left(T^{*}X\right)$
is the norm square function on the cotangent bundle. While the sub-principal
symbol is zero $p_{1}=0$. Its relevant energy level is $\Sigma_{1}=S^{*}X$
the unit cosphere bundle of the manifold. This carries the contact
form $\alpha_{g}\in\Omega^{1}\left(S^{*}X\right)$ which is the restriction
$\alpha_{g}=\left.\alpha\right|_{S^{*}X}$ of the tautological one
form on the cotangent space. It is then well known that the Hamilton
vector field of the principal symbol $R_{g}=H_{\left|\xi\right|^{2}}$
is the Reeb vector field of this contact form. 

For Corollary \prettyref{cor:Quant. Ber.}, the manifold is taken
to be a negatively curved Riemannian manifold. In this case, the geodesic
flow is known to be an Anosov Reeb flow. Hence the recurrence set
volume bounds \prettyref{eq:recurrence measure Anosov}, \prettyref{eq:recurrence measure extended Anosov}
apply. We may now set $\varepsilon=h^{\delta}$, with $\delta=\frac{1}{4}$,
and $T=\frac{1}{4}T_{E}^{\ell,1}\left(h\right)=\frac{1}{4}\frac{\left|\ln h\right|}{\Lambda_{\textrm{max}}^{1}+\ell}$
in \prettyref{thm:general Weyl law}. Then \prettyref{eq:general remainder estimate}
becomes
\[
\left|R_{h}\right|\leq\textrm{vol}\left(S^{*}X\right)4\left(\Lambda_{\textrm{max}}^{1}+\ell\right)\left|\ln h\right|^{-1}+O\left(h^{\frac{1}{4}.\left(2n-1\right).\frac{\ell}{\Lambda_{\textrm{max}}^{1}+\ell}}+\left|\ln h\right|^{-2}+h^{\frac{1}{2}}\right)
\]
using the volume bounds \prettyref{eq:recurrence measure Anosov},
\prettyref{eq:recurrence measure extended Anosov}. Since $\ell>0$
is arbitrary, the equation \prettyref{eq:lambda max in remainder}
is proved. The second estimate \prettyref{eq:entropy in remainder}
is now a consequence of the inequality  $\Lambda_{\textrm{max}}^{1}\leq\frac{4}{n}h_{\textrm{top}}$
from \prettyref{thm: lambda max vs htop thm.}.

For Corollary \prettyref{cor:Lie group corollary}, the manifold $X=G$
is a compact Lie group equipped with a bi-invariant Riemannian metric.
We may now set $\varepsilon=h^{\delta}$, with $\left[0,\frac{1}{2}\right)\ni\delta=\begin{cases}
0, & p=1,\\
\frac{p+1}{3p+1}, & p>1,
\end{cases}$ and $T=h^{-\frac{p-1}{3p+1}}$ in \prettyref{thm:general Weyl law}.
Then \prettyref{eq:general remainder estimate} becomes 
\begin{align*}
R_{h} & =O\left(T^{-1}+\varepsilon^{p-1}T^{p}+h^{1-2\delta}\right)\\
 & =O\left(h^{\frac{p-1}{3p+1}}\right)
\end{align*}
 using the volume bounds \prettyref{eq:recurrence set bound Lie group},
\prettyref{eq:ext recurrence bound Lie group} as required.

Finally for the last Corollary \prettyref{cor: Surface of revolution corollary},
the manifold is a surface of revolution. Here we set $\varepsilon=h^{\delta},$
with $\delta=\frac{2r-1}{4r-1}\in\left[0,\frac{1}{2}\right)$ and
$T=h^{-\frac{1}{4r-1}}$ in \prettyref{thm:general Weyl law}. Then
\prettyref{eq:general remainder estimate} becomes 
\begin{align*}
R_{h} & =O\left(\varepsilon^{\frac{1}{r}}T^{1-\frac{1}{r}}+T^{-1}+h^{1-2\delta}\right)\\
 & =O\left(h^{\frac{1}{4r-1}}\right)
\end{align*}
using \prettyref{eq:recurrence set surf. rev.} as required. 

We remark that our choices of $\varepsilon$ and $T$ are optimal
based on the corresponding bounds for recurrence set volumes in each
case.
\end{proof}
\bibliographystyle{siam}
\bibliography{biblio}

\end{document}